\newcommand{\be}{\begin{equation}}
\newcommand{\ee}{\end{equation}}
\newcommand{\ben}{\begin{eqnarray*}}
\newcommand{\een}{\end{eqnarray*}}
\newtheorem{examp}{\sc Example}
\newtheorem{remk}{\sc Remark}
\newtheorem{corol}{\sc Corollary}
\newtheorem{lemma}{\sc Lemma}
\newtheorem{theorem}{\sc Theorem}
\newtheorem{defn}{\sc Definition}
\newcommand{\bt}{\begin{theorem}}
\newcommand{\et}{\end{theorem}}
\newcommand{\bl}{\begin{lemma}}
\newcommand{\el}{\end{lemma}}
\newcommand{\bed}{\begin{defn}}
\newcommand{\eed}{\end{defn}}
\newcommand{\brem}{\begin{remk}}
\newcommand{\erem}{\end{remk}}
\newcommand{\bex}{\begin{examp}}
\newcommand{\eex}{\end{examp}}
\newcommand{\bcl}{\begin{corol}}
\newcommand{\ecl}{\end{corol}}
\newcommand{\NI}{\noindent}
\newcommand{\al}{\alpha}
\newcommand{\vsp}{\vskip 0.5em}
\theoremstyle{definition}
\theoremstyle{remark}
\numberwithin{equation}{section}
\numberwithin{theorem}{section}
\numberwithin{lemma}{section}
\begin{document}
\title {More On  Hidden $Z$-Matrices and Linear Complementarity Problem }
\author{R. Jana$^{a}$, A. Dutta$^{a, 1}$, A. K. Das$^{b}$\\
\emph{\small $^{a}$Jadavpur University, Kolkata , 700 032, India.}\\	
\emph{\small $^{b}$Indian Statistical Institute, 203 B. T.
	Road, Kolkata, 700 108, India.}\\
\emph{\small $^{1}$Email: aritradutta001@gmail.com}}
\date{}
\maketitle

\date{}
\maketitle
\begin{abstract}
	\NI In this article we study linear complementarity problem with hidden $Z$-matrix. We extend the results of Fiedler and Pt{\'a}k for the linear system in complementarity problem using game theoretic approach. We establish a result related to singular hidden $Z$-matrix. We show that for a non-degenerate feasible basis, linear complementarity problem with hidden $Z$-matrix has unique non-degenerate solution under some assumptions. The purpose of this paper is to study some properties of hidden $Z$-matrix.
 \noindent \\

\NI{\bf Keywords:} Hidden $Z$-matrix, linear complementary problem, value of a matrix, $P_0$-matrix, $E_0$-matrix.\\ 

\NI{\bf AMS subject classifications:} 90C33, 15A39, 15B99.

\end{abstract}
\footnotetext[1]{Corresponding author}
\section{Introduction}
 The linear complementarity problem finds a vector in a finite dimensional real vector space that satisfies a particular system of inequalities. The problem is defined as follows: 
 
 Given $A\in R^{n\times n}$ and a vector $\,q\,\in\,R^{n},\,$ we consider the linear complementarity problem as
 	\be \label{1}
 	q + Az \geq 0, \, z \geq 0,
 	\ee
 	\be \label{2}
 	z^T(q + Az) = 0. \ee
More specifically the problem is denoted as LCP$(q, A)$ for which we find a vector $z \in R^n$ satisfying the linear system \ref{1} as well as the complementary condition \ref{2} or show that there does not exist any $z$ satisfying the linear system \ref{1} as well as the complementary condition \ref{2}. The case of LCP$(q, A)$ with $q = 0$ is known as homogeneous LCP with $A$ for which $z$ in the solution set of LCP$(0, A)$ implies that $\lambda z$ belongs to the solution set of LCP$(0, A)$ for all scaler $\lambda \geq 0.$ For details see \cite{das1}. The linear complementarity problem includes linear programming, linear fractional programming, convex quadratic programming, bimatrix game and a number of applications reported in operations research, multiple objective programming problem, mathematical economics, geometry and engineering.
 
The linear complementarity problem along with a hidden $Z$-matrices received wide attention in the literature. A generalization of $Z$-matrices was addressed by Mangasarian \cite{mangasarian} to study the linear complementarity problems solvable as linear program. Pang \cite{pang} proposed this class as hidden $Z$-matrices. A matrix $A \in R^{n \times n}$ is called hidden $Z$-matrix if there exist Z-matrices $X,$ $Y \in R^{n \times n}$ and $r, s \in R^{n}_{+}$ such that
 		\be
 		 AX = Y,
 		 \ee
 		 \be \label{hidden}
 		 r^TX + s^TY >0.
 	    \ee

A hidden $Z$-matrix is said to be completely hidden $Z$-matrix if all its principal submatrices are hidden $Z$-matrix. Fiedler and Pt{\'a}k \cite{fiedler} studied $Z$-matrix in the context of linear complementarity problem. They showed that existence of a strictly positive vector $x$ for a $Z$-matrix $A$ such that $Ax \geq 0$ allows $A$ to be $P_0$-matrix.  In this paper we extend this result in terms of hidden $Z$-matrix. The paper is organized as follows. Section 2 presents some basic notations and results. In section 3, we show a hidden $Z$-matrix under some additional conditions to be a $P_0$ matrix. We settle a result related to singular hidden $Z$-matrix. We illustrate our result by giving a suitable example of singular hidden $Z$-matrix. We show that linear complementarity problem with hidden $Z$-matrix has unique non-degenerate solution under some assumptions. Finally we show that a linear complementarity problem with hidden $Z$-matrix can be solved using linear programming problem.

\section{Preliminaries}
\noindent We denote the $n$ dimensional real space by $R^n.$ $R^n_+$ denotes the nonnegative orthant of $R^n.$ We consider vectors and matrices with real entries. Any vector $x\in R^{n}$ is a column vector and  $x^{T}$ denotes the row transpose of $x.$ $e$ denotes the vector of all $1.$ If $A$ is a matrix of order $n,$ $\al \subseteq \{1, 2, \cdots, n\}$ and $\bar{\al} \subseteq \{1, 2, \cdots, n\} \setminus \al$ then $A_{\al \bar{\al}}$ denotes the submatrix of $A$ consisting of only the rows and columns of $A$ whose indices are in $\al$ and $\bar{\al}$ respectively. $A_{\al \al}$ is called a principal submatrix of A and det$(A_{\al \al})$ is called a principal minor of $A.$ Given a matrix $A \in R^{n \times n}$ and a vector $q \in R^n,$ we define the feasible set FEA$(q, A)$ $= \{z \in R^n : z \geq 0, q + Az \geq 0\}$ and the solution set of LCP$(q, A)$ by SOL$(q, A)$ $=\{z \in \text{FEA}(q, A) : z^T(q + Az) = 0\}.$

The concept of principal pivot transform \cite{neogy1} plays an important role in this context. Some of the matrix classes are invariant under the principal pivot transform. For details see \cite{das}.

We state the results of two person matrix games in linear system with complementary conditions due to von Neumann \cite{von} and Kaplansky \cite{kaplansky}. The results say that there exist $x^\star \in R^m, y^\star \in R^n$ and $v \in R$ such that 
\begin{center}
	$\sum_{i=1}^{m} x^{\star}_i a_{ij} \leq v, \; \forall\; j = 1, 2, \cdots, n,$\\
	$\sum_{j=1}^{n} y^{\star}_j a_{ij} \geq v, \; \forall\; i = 1, 2, \cdots, m.$
\end{center}
The strategies $(x^{\star}, y^{\star})$ are said to be optimal strategies for player I and player II and $v$ is said to be minimax value of game.
We write $v(A) > 0$ to denote the value of the game corresponding to the payoff matrix $A.$ The value of the game $v(A)$ is positive(nonnegative) if there exists a $0 \neq x \geq 0$ such that $Ax > 0\;(Ax \geq 0).$ Similarly, $v(A)$ is negative(nonpositive) if there exists a $0 \neq y \geq 0$ such that $y^TA < 0\;(y^TA \leq 0).$ The value of the matrix game corresponding to payoff matrix $A \in R^{n\times n}$ is preserved in all its PPTs. 
Now we give the definitions of some matrix classes which will be required in the next section.

A matrix $A \in R^{n \times n}$ is said to be\\
	 $-$ $P\,(P_{0})$-matrix if all its principal minors are positive (nonnegative). \\
	 $-$ almost $P\,(P_{0})$-matrix if all its principal minors upto order $(n-1)$ are positive (nonnegative) and det$(A) < 0.$\\
	 $-$ $E_0\,(E)$ if $0 \neq x \geq 0 \implies x_k > 0 \; \text{and} \; (Ax)_k \geq 0\,((Ax)_k > 0)$ for some $k$.\\
	 $-$ $S$-matrix \cite{pang} if there exists a vector $x>0$ such that $Ax>0$ and $\bar{S}$-matrix if all its principal submatrices are $S$-matrix.\\
	 $-$ $Z$-matrix if off-diagonal elements are all non-positive and $K\,(K_0)$-matrix if it is a $Z$-matrix as well as $P\,(P_0)$-matrix.\\
	 $-$ $N$-matrix if all its principal minors are negative. An $N$-matrix is called an $N$-matrix of the first category if it contains at least one positive entry otherwise it is called an $N$-matrix of the second category.\\
	$-$ $Q$-matrix if for every $q,$ LCP$(q, A)$ has at least one solution. \\
	
Now we give some theorems which will be required for discussion in the next section.
\begin{theorem} \cite{cps} \label{hiddenz}
	Let $A \in R^{n \times n}$ be a hidden $Z$-matrix. Then for any two $Z$-matrices satisfying $AX = Y$ and $r^TX + s^TY > 0$ for some $r, s \in R^n_+.$ Then
	\begin{enumerate}[(i)]
		\item $X$ is nonsingular and
		\item there exists an index set $\alpha \subseteq \{1, 2, \cdots, n\}$ such that the matrix
			$\left[\begin{array}{rr} 
			X_{\al \al} & X_{\al \bar{\al}} \\
			Y_{\bar{\al} \al} & Y_{\bar{\al} \bar{\al}} \\
			\end{array}\right]$ is a $K$-matrix.
	\end{enumerate}  
\end{theorem}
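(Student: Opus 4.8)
The plan is to first recast the hidden‑$Z$ hypothesis in the alternative‑theorem / game‑value language emphasised in this paper, then read off (i) from the sign structure of $Z$‑matrices, and finally obtain (ii) by a principal‑pivot induction. By the theorem of the alternative applied to the $n\times 2n$ matrix with blocks $X^{T}$ and $Y^{T}$ (equivalently, by the von Neumann--Kaplansky description of the value of a matrix game), the feasibility of $r^{T}X+s^{T}Y>0$ with $r,s\in R^{n}_{+}$ is equivalent to the assertion, which I call $(\ast)$: there is no $u\in R^{n}$ with $u\geq 0$, $u\neq 0$, $Xu\leq 0$ and $Yu\leq 0$. I will use $(\ast)$ as the working hypothesis throughout.

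\emph{Proof of (i).} Suppose $Xv=0$ with $v\neq 0$; since $Y=AX$ we also have $Yv=0$. If $v\geq 0$ or $v\leq 0$ then $\pm v$ contradicts $(\ast)$, so $v$ has a strictly negative component; set $\tau=\{i:v_{i}<0\}$ and $w=-v_{\tau}>0$. Reading the identities $Xv=0$, $Yv=0$ on the rows indexed by $\tau$ and using that the off‑diagonal blocks $X_{\tau\bar{\tau}}$, $Y_{\tau\bar{\tau}}$ are entrywise nonpositive gives $X_{\tau\tau}w\leq 0$ and $Y_{\tau\tau}w\leq 0$. Extending $w$ by zeros to $\hat w\in R^{n}_{+}\setminus\{0\}$ and invoking the $Z$‑property once more on the rows outside $\tau$ (whose entries meeting $\tau$ are nonpositive) yields $X\hat w\leq 0$ and $Y\hat w\leq 0$, again contradicting $(\ast)$. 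Hence $X$ is nonsingular.

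\emph{Proof of (ii).} For every $\alpha$ the matrix in the statement is automatically a $Z$‑matrix, each of its rows being a row of $X$ or of $Y$; so, by the classical Fiedler--Pt{\'a}k characterization of $K$‑matrices inside the class of $Z$‑matrices, it is a $K$‑matrix as soon as some $\xi\geq 0$ satisfies $(X\xi)_{i}>0$ for $i\in\alpha$ and $(Y\xi)_{i}>0$ for $i\in\bar{\alpha}$. Taking $\alpha=\{i:(X\xi)_{i}>0\}$, all of (ii) reduces to: find $\xi>0$ with $\max\{(X\xi)_{i},(Y\xi)_{i}\}>0$ for every $i$. (Testing $u=e_{i}$ in $(\ast)$ already gives $X_{ii}>0$ or $Y_{ii}>0$ for each $i$, so an admissible diagonal pivot is always available.) I would prove this reduced statement by induction on $n$, the case $n=1$ being immediate from $(\ast)$: choose $k$ and $M\in\{X,Y\}$ with $M_{kk}>0$, pivot on $M_{kk}$ to obtain order‑$(n-1)$ matrices $S_{X},S_{Y}$ — the Schur complement of $M$ together with the ``mixed'' Schur complement of the other matrix formed with the \emph{same} pivot row — observe by the sign bookkeeping that both are again $Z$‑matrices, apply the inductive hypothesis to $(S_{X},S_{Y})$ to get $\xi'>0$, and recover $\xi=(\xi',\xi_{k})>0$ by taking $\xi_{k}$ just above the threshold $-(M_{k\bar{k}}\xi')/M_{kk}$, so that $(M\xi)_{k}>0$ while the remaining coordinates of $X\xi$ and $Y\xi$ lie arbitrarily close to $S_{X}\xi'$ and $S_{Y}\xi'$.

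\emph{The main obstacle} is precisely that $(S_{X},S_{Y})$ should inherit $(\ast)$ for \emph{some} admissible choice of $(k,M)$, so that the inductive hypothesis applies. Running the lifting computation backwards, a failed pivot $(k,X)$ produces a vector $u\geq 0$, $u\neq 0$ with $Xu\leq 0$, $(Yu)_{i}\leq 0$ for $i\neq k$, and $(Yu)_{k}>0$; the sign of the $k$‑th coordinate forces $Y_{kk}>0$, so the pivot $(k,Y)$ is admissible too. One must then show that the two admissible pivots at $k$ — or at a suitably chosen $k$ — cannot both fail: combining the vectors delivered by the failed pivots with appropriate nonnegative weights, exploiting that all diagonal entries are then positive and that the $Z$‑structure pins down the off‑diagonal signs, should assemble a single nonzero $u\geq 0$ with $Xu\leq 0$ and $Yu\leq 0$, contradicting $(\ast)$. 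Carrying out this combinatorial sign‑chase is the technical heart of the argument; granting it, $\alpha=\tilde{\alpha}\cup\{k\}$ with $\tilde{\alpha}$ furnished by the induction makes the displayed matrix a $K$‑matrix, which together with (i) completes the proof.
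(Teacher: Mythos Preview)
This theorem appears in the paper only as a cited preliminary result from \cite{cps}; no proof is given there, so there is no in-paper argument to compare against.

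Assessing your attempt on its own: the alternative-theorem reformulation $(\ast)$ is correct, and your proof of (i) is complete and clean. For (ii), the reduction to producing a single $\xi>0$ with $\max\{(X\xi)_i,(Y\xi)_i\}>0$ for every $i$, and then taking $\alpha=\{i:(X\xi)_i>0\}$, is also correct. But your inductive scheme is, by your own admission, unfinished: you call the inheritance of $(\ast)$ by the Schur pair $(S_X,S_Y)$ ``the technical heart of the argument'' and proceed by ``granting it.'' That gap is genuine, and the combination you sketch does not close it. If the pivot $(k,X)$ fails, the lifted witness $u\ge 0$, $u\ne 0$ satisfies $Xu\le 0$, $(Yu)_{\bar k}\le 0$, $(Xu)_k=0$, and possibly $(Yu)_k>0$; the symmetric failure of $(k,Y)$ yields $v\ge 0$, $v\ne 0$ with $Yv\le 0$, $(Xv)_{\bar k}\le 0$, $(Yv)_k=0$, and possibly $(Xv)_k>0$. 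For any $\lambda,\mu\ge 0$ the combination $w=\lambda u+\mu v$ then has $(Xw)_k=\mu(Xv)_k$ and $(Yw)_k=\lambda(Yu)_k$, so forcing both to be nonpositive drives $\lambda=\mu=0$ and hence $w=0$. Thus a simple nonnegative combination of the two failure certificates cannot produce a violation of $(\ast)$, and a different device---either a more global choice of pivot index, or the least-element/LP-based construction used in the Cottle--Pang--Stone proof---is required. As written, part (ii) is not established.
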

\begin{theorem} \cite{pang} \label{pangps}
	Let $A \in R^{n \times n}$ be a hidden $Z$-matrix. Then $A$ is a $P$-matrix if and only if $A$ is an $S$-matrix.
\end{theorem}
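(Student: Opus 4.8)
The plan is to establish the two implications separately. The direction ``$A\in P\Rightarrow A\in S$'' requires nothing about hidden $Z$-matrices and follows from the game-theoretic facts of Section~2; the converse is where Theorem~\ref{hiddenz} enters.

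\emph{The forward direction.} First, $A$ is an $S$-matrix if and only if $v(A)>0$: if $0\neq x\ge 0$ and $Ax>0$, then $x+\varepsilon e>0$ and $A(x+\varepsilon e)>0$ for all small $\varepsilon>0$. Now suppose $A\in P$ but $v(A)\le 0$. By the von Neumann--Kaplansky theorem quoted in Section~2, player~I has an optimal (mixed, hence nonzero) strategy $y\ge 0$ with $y^{T}A\le v(A)\,e^{T}\le 0$, i.e. $A^{T}y\le 0$. Setting $\beta=\{i:y_i>0\}\neq\emptyset$ gives $A^{T}_{\beta\beta}y_\beta\le 0$ with $y_\beta>0$, so $(y_\beta)_i\,(A^{T}_{\beta\beta}y_\beta)_i\le 0$ for every $i\in\beta$; but this is impossible for a $P$-matrix (Fiedler--Pt\'ak), and $A^{T}_{\beta\beta}$ is a principal submatrix of the $P$-matrix $A^{T}$. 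Hence $A\in S$.

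\emph{The converse.} Let $A$ be a hidden $Z$-matrix that is an $S$-matrix, and fix $Z$-matrices $X,Y$ and $r,s\in R^{n}_{+}$ with $AX=Y$ and $r^{T}X+s^{T}Y>0$. By Theorem~\ref{hiddenz}, $X$ is nonsingular (so $A=YX^{-1}$) and for some index set $\al$ the mixed matrix $M$ of that theorem --- rows of $X$ indexed by $\al$, rows of $Y$ indexed by $\bar{\al}$ --- is a $K$-matrix. The key first step is to promote $X$ and $Y$ to $K$-matrices. Pick $u>0$ with $Au>0$ and put $v=X^{-1}u$; then $Xv=u>0$ and $Yv=AXv=Au>0$, so the coordinates of $Mv$ are the positive numbers $u_i$ for $i\in\al$ and $(Au)_i$ for $i\in\bar{\al}$. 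Thus $Mv>0$, and as $M$ is a $K$-matrix, $M^{-1}\ge 0$ and is nonsingular, so $v=M^{-1}(Mv)>0$. Hence $v>0$ with $Xv>0$ and $Yv>0$, so the $Z$-matrices $X,Y$ are $S$-matrices, i.e. $K$-matrices (Fiedler--Pt\'ak); in particular $\det X>0$, $\det Y>0$, so $\det A=\det Y/\det X>0$ and $A$ is nonsingular. Writing $P_{\al},P_{\bar{\al}}$ for the coordinate projections onto $\al$ and $\bar{\al}$, we have $M=(P_{\al}+P_{\bar{\al}}A)X$ and $P_{\al}+P_{\bar{\al}}A$ is block lower triangular with diagonal blocks $I_{\al}$ and $A_{\bar{\al}\bar{\al}}$, so $\det A_{\bar{\al}\bar{\al}}=\det M/\det X>0$; symmetrically the complementary mixed matrix $M'$ (rows of $Y$ on $\al$, rows of $X$ on $\bar{\al}$) is a $Z$-matrix with $M'v>0$, hence a $K$-matrix, and $M'=(P_{\bar{\al}}+P_{\al}A)X$ gives $\det A_{\al\al}=\det M'/\det X>0$.

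\emph{Finishing, and the main obstacle.} It remains to prove that every principal minor of $A$ is positive; this is the heart of the matter, since Theorem~\ref{hiddenz} controls only the single split $(\al,\bar{\al})$. I would argue by the sign criterion for $P$-matrices: assume some nonzero $x$ has $x_i(Ax)_i\le 0$ for all $i$, put $z=X^{-1}x\neq 0$ and $w=Yz=Ax$, so $x=Xz$, $x_iw_i\le 0$ for all $i$, and $w\neq 0$ (as $A$ is nonsingular). Feeding this into $AX=Y$ together with (a) the sign pattern of the $Z$/$K$-matrices $X,Y$ (nonpositive off-diagonal, positive diagonal entries) and (b) the strict inequalities $Xv>0$, $Yv>0$ from the $S$-vector $v$, one forces either a positive off-diagonal entry in $X$ or $Y$, or a vector whose sign is ``reversed'' by the $K$-matrix $M$ (equivalently $M'$) --- each impossible. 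The prototype is $n=2$ with $\al=\{1\}$: we already know $\det A>0$ and $a_{22}=\det A_{\bar{\al}\bar{\al}}>0$, so if $a_{11}\le 0$ then $\det A>0$ forces $a_{12}a_{21}<0$, the $S$-property applied to the first row forces $a_{12}>0$ and hence $a_{21}<0$, and then $Y_{12}=a_{11}x_{12}+a_{12}x_{22}>0$ (since $a_{11}x_{12}\ge 0$ with both factors $\le 0$, and $a_{12}x_{22}>0$ with both $>0$), contradicting that $Y$ is a $Z$-matrix; so $a_{11}>0$ and $A\in P$. Carrying out this sign propagation for an arbitrary principal submatrix is the step I expect to require real care. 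An alternative packaging: $A\in S$ makes LCP$(q,A)$ feasible for every $q$, hence solvable by Mangasarian's result that a hidden $Z$-matrix problem is solvable by a single linear program; one then derives uniqueness of the solution using the $K$-matrices $M,M'$ and concludes $A\in P$ from the classical equivalence ``$A\in P\iff$ LCP$(q,A)$ has a unique solution for every $q$''.
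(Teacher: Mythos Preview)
The paper does not prove this theorem; it is quoted as a preliminary result from \cite{pang} and is used later (e.g.\ in the proof of Theorem~\ref{maintheorem}). So there is no ``paper's own proof'' to compare against, and your proposal must be judged on its own.

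Your forward direction is fine and standard. In the converse you correctly use Theorem~\ref{hiddenz} to get $v=X^{-1}u$ with $Mv>0$; since a $K$-matrix has a nonnegative inverse with no zero row, $v=M^{-1}(Mv)>0$, and then $Xv>0$, $Yv>0$ force the $Z$-matrices $X,Y$ to be $K$-matrices. Your determinant identity $\det A_{\bar\al\bar\al}=\det M/\det X$ via $M=(P_\al+P_{\bar\al}A)X$ is also correct.

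The genuine gap is where you stop: you treat ``the general principal minor'' as the hard part and offer only a sign-propagation sketch and an LCP-uniqueness heuristic, neither of which is a proof. But in fact you have already done all the work --- you simply applied your own argument only to the particular split $(\al,\bar\al)$ supplied by Theorem~\ref{hiddenz}, when it works for \emph{every} split. Once $v>0$ with $Xv>0$ and $Yv>0$, then for \emph{any} index set $\beta$ the mixed matrix
\[
M_\beta \;=\; (P_\beta + P_{\bar\beta}A)\,X
\]
(rows of $X$ on $\beta$, rows of $Y$ on $\bar\beta$) is a $Z$-matrix with $M_\beta v>0$, hence a $K$-matrix by Theorem~\ref{Z}. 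Your block-triangular computation then gives
\[
\det A_{\bar\beta\bar\beta}\;=\;\frac{\det M_\beta}{\det X}\;>\;0
\]
for every $\bar\beta\subseteq\{1,\dots,n\}$, and $A\in P$ follows immediately. No induction, no sign-chasing, no LCP detour is needed; the role of the special $\al$ in Theorem~\ref{hiddenz} was only to bootstrap $v>0$, after which it can be discarded.
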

\begin{theorem} \cite{fiedler} \label{Z}
	Let $A$ be a $Z$-matrix. If $v(A)$ is strictly greater than zero then $A$ is a $P$-matrix.	
\end{theorem}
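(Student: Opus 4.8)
\noindent The plan is to convert the game-theoretic hypothesis into a statement about positive vectors and then read off every principal minor. By the characterization of the value recalled above, $v(A)>0$ produces a vector $x$ with $0\neq x\geq 0$ and $Ax>0$. The single essential use of the $Z$-structure is that such an $x$ must in fact be strictly positive: if $x_i=0$ for some $i$, then, since $a_{ij}\leq 0$ and $x_j\geq 0$ for every $j\neq i$, we would have $(Ax)_i=\sum_{j\neq i}a_{ij}x_j\leq 0$, contradicting $(Ax)_i>0$. Hence $x>0$, so $A$ is an $S$-matrix.

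A short finish is now available: a $Z$-matrix is trivially a hidden $Z$-matrix, since in \ref{hidden} one may take $X=I$, $Y=A$, $r=e>0$ and $s=0$, which gives $AX=Y$ and $r^{T}X+s^{T}Y=e^{T}>0$; Theorem \ref{pangps} then applies and, being an $S$-matrix, $A$ is a $P$-matrix. I would record this, but present as the main argument the following self-contained version. First, the strict-positivity property is inherited by every principal submatrix: for $\alpha\subseteq\{1,\dots,n\}$ and $i\in\alpha$, writing $(Ax)_i=\sum_{j\in\alpha}a_{ij}x_j+\sum_{j\notin\alpha}a_{ij}x_j$ and using $a_{ij}\leq 0$, $x_j>0$ for $j\notin\alpha$, one obtains $(A_{\alpha\alpha}x_{\alpha})_i\geq (Ax)_i>0$; thus $A_{\alpha\alpha}x_{\alpha}>0$ with $x_{\alpha}>0$, and $A_{\alpha\alpha}$ is again a $Z$-matrix. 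It therefore suffices to show $\det M>0$ for an arbitrary $Z$-matrix $M$ admitting some $u>0$ with $Mu>0$, and then to apply this to each $A_{\alpha\alpha}$.

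For such an $M=(m_{ij})$, set $t:=\max_i m_{ii}$ and $B:=tI-M$, so that $B\geq 0$ entrywise and $Mu>0$ reads $Bu\leq t'u$ componentwise with $t':=\max_i(Bu)_i/u_i<t$. Iterating gives $B^{k}u\leq (t')^{k}u$ for all $k$, so (since $\|B^{k}\|^{1/k}\to\rho(B)$) the spectral radius obeys $\rho(B)\leq t'<t$; hence every eigenvalue $\mu$ of $B$ has $|\mu|<t$, and every eigenvalue $t-\mu$ of $M$ has positive real part. Since $\det M$ equals the product of the eigenvalues of $M$, the real ones being positive and the non-real ones occurring in conjugate pairs with product $|t-\mu|^{2}>0$, we conclude $\det M>0$. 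Applying this to every $A_{\alpha\alpha}$ shows that all principal minors of $A$ are positive, that is, $A$ is a $P$-matrix.

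The main obstacle is the first step: $v(A)>0$ supplies only a nonnegative witness, whereas the $S$-matrix characterization of $P$-matrices within the hidden $Z$ class (and the determinant computation above) needs a strictly positive one together with control of all principal submatrices, both of which come purely from the sign pattern of the off-diagonal entries. Once this is in hand the rest is either the citation of Theorem \ref{pangps} or the routine spectral estimate; I also note that replacing $Mu>0$ by $Mu\geq 0$ (keeping $u>0$) throughout gives $\rho(B)\leq t$ and hence $\det M\geq 0$, which is the mechanism behind the $P_0$-statement of Fiedler--Pt{\'a}k recalled in the introduction.
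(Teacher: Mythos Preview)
The paper does not prove this statement: Theorem~\ref{Z} is quoted from Fiedler and Pt\'ak as a preliminary result and is later \emph{used} (for instance inside the proof of Theorem~\ref{theorem}), but no argument for it appears in the paper. So there is no in-paper proof to compare your proposal against.

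Your argument is correct. Upgrading the nonnegative witness $x$ supplied by $v(A)>0$ to a strictly positive one using the $Z$-sign-pattern of the off-diagonals is the key observation, and the inheritance $(A_{\alpha\alpha}x_{\alpha})_i\ge (Ax)_i>0$ to every principal submatrix is handled correctly. The determinant step via $M=tI-B$ with $B\ge 0$ and $\rho(B)<t$ is the standard $M$-matrix/Perron--Frobenius route and is essentially how Fiedler and Pt\'ak proceed; the iteration $B^{k}u\le (t')^{k}u$ is the Collatz--Wielandt bound, and it does give $\rho(B)\le t'$ once one converts it into a row-sum estimate for $B^{k}$ using $\min_i u_i>0$ (your phrase ``since $\|B^{k}\|^{1/k}\to\rho(B)$'' compresses this, but the conclusion is right).

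One remark on the ``short finish'': within the paper's logical structure both Theorem~\ref{Z} and Theorem~\ref{pangps} are imported as independent citations, so deducing \ref{Z} from \ref{pangps} is formally permissible here. As a stand-alone proof of the Fiedler--Pt\'ak result, however, it inverts the historical dependence (Pang's characterization of hidden $Z\cap S$ as $P$ ultimately rests on the $M$-matrix theory that \ref{Z} encapsulates), so your self-contained spectral argument is the honest route and the one to present as primary.
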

\begin{theorem} \cite{fiedler} \label{Z1}
	Let $A$ be a $Z$-matrix. If there exists a vector $x$ strictly greater than zero such that $Ax \geq 0,$ then $A$ is a $P_0$-matrix.	
\end{theorem}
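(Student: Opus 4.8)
The plan is to reduce to Theorem~\ref{Z} by a diagonal perturbation followed by a limiting argument. For $\epsilon > 0$ set $A_\epsilon = A + \epsilon I$. Since adding a positive multiple of the identity alters only the diagonal entries, $A_\epsilon$ is again a $Z$-matrix. Moreover, using the hypotheses $x > 0$ and $Ax \geq 0$ we obtain $A_\epsilon x = Ax + \epsilon x > 0$, so there is a vector $0 \neq x \geq 0$ (in fact $x > 0$) with $A_\epsilon x > 0$; hence $v(A_\epsilon) > 0$. By Theorem~\ref{Z}, $A_\epsilon$ is a $P$-matrix, that is, every principal minor of $A_\epsilon$ is strictly positive.

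Next I would push the perturbation to the limit. Fix $\alpha \subseteq \{1, 2, \cdots, n\}$ and put $k = |\alpha|$. The principal submatrix of $A_\epsilon$ indexed by $\alpha$ is exactly $A_{\alpha\alpha} + \epsilon I_k$, so $\epsilon \mapsto \det(A_{\alpha\alpha} + \epsilon I_k)$ is a (monic, degree $k$) polynomial in $\epsilon$ with real coefficients, which is strictly positive for every $\epsilon > 0$ by the previous paragraph. Letting $\epsilon \to 0^+$ and using continuity, $\det(A_{\alpha\alpha}) = \lim_{\epsilon \to 0^+}\det(A_{\alpha\alpha} + \epsilon I_k) \geq 0$. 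Since $\alpha$ was arbitrary, every principal minor of $A$ is nonnegative, i.e. $A$ is a $P_0$-matrix.

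As for the main difficulty: the argument is short once Theorem~\ref{Z} is in hand, and the only points that genuinely need care are (a) checking that the perturbation preserves both the $Z$-structure and the game-value hypothesis required by Theorem~\ref{Z}, and (b) noting that strict positivity of the perturbed principal minors can only degrade to nonnegativity — not stay strict — in the limit, which is precisely why the conclusion is $P_0$ rather than $P$ (and this cannot be improved: e.g. $A = 0$ with $x = e$ satisfies the hypotheses but is not a $P$-matrix). One could instead attempt a direct induction on the order of the principal minors by a cofactor expansion, but tracking the sign conditions there is more delicate than the perturbation route, so I would favour the argument above.
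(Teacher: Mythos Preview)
Your argument is correct. Note, however, that the paper does not supply its own proof of Theorem~\ref{Z1}: the result is stated in the Preliminaries section and attributed to Fiedler and Pt\'ak \cite{fiedler}, so there is no in-paper proof to compare against. That said, your perturbation route is sound and fits naturally with the surrounding material: $A_\epsilon = A + \epsilon I$ is again a $Z$-matrix since only the diagonal is altered; the hypotheses $x>0$ and $Ax\ge 0$ give $A_\epsilon x = Ax + \epsilon x > 0$, which is exactly the condition $v(A_\epsilon)>0$ as defined in the paper; Theorem~\ref{Z} then makes $A_\epsilon$ a $P$-matrix for every $\epsilon>0$; and continuity of $\epsilon\mapsto\det(A_{\alpha\alpha}+\epsilon I)$ pushes strict positivity to nonnegativity in the limit. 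Your closing observation that $A=0$ with $x=e$ shows the conclusion cannot be strengthened to $P$ is also correct.
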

\begin{theorem} \cite{cps} \label{identical}
	The classes $E$ and $\bar{S}$ are identical i.e. $E = \bar{S}$.
\end{theorem}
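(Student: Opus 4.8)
\emph{Proof proposal.} The plan is to rephrase both classes through principal submatrices and then reduce the asserted equality to a single inclusion. Unwinding the definitions, $A\in E$ if and only if for every nonempty $\alpha\subseteq\{1,\dots,n\}$ the system $x_\alpha>0,\ A_{\alpha\alpha}x_\alpha\le 0$ is infeasible: if $0\ne x\ge 0$ violates the $E$-condition, take $\alpha=\{k:x_k>0\}$ and restrict, and conversely extend such an $x_\alpha$ by zeros; moreover, by passing to supports one may equivalently require that no $0\ne x_\alpha\ge 0$ satisfy $A_{\alpha\alpha}x_\alpha\le 0$. Likewise $A\in\bar S$ if and only if for every nonempty $\alpha$ there is $x_\alpha>0$ (equivalently $0\ne x_\alpha\ge 0$, since $A_{\alpha\alpha}x_\alpha>0$ is an open condition) with $A_{\alpha\alpha}x_\alpha>0$. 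Now invoke the minimax theorem of von Neumann and Kaplansky recalled in Section~2: for a square matrix $B$ exactly one of ``$\exists\,0\ne y\ge 0,\ B^{T}y\le 0$'' and ``$\exists\,x>0,\ Bx>0$'' holds, i.e.\ $B\in S$ iff no $0\ne y\ge 0$ satisfies $B^{T}y\le 0$. Taking $B=(A^{T})_{\alpha\alpha}=(A_{\alpha\alpha})^{T}$ (so $B^{T}=A_{\alpha\alpha}$) and comparing with the two reformulations gives
\[
A\in E\iff (A^{T})_{\alpha\alpha}\in S\ \text{ for all }\alpha\iff A^{T}\in\bar S .
\]

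Because of this equivalence it suffices to prove $\bar S\subseteq E$: granting it, $A\in E$ forces $A^{T}\in\bar S\subseteq E$, so the equivalence applied to $A^{T}$ returns $A=(A^{T})^{T}\in\bar S$, whence also $E\subseteq\bar S$. (Equivalently, the equivalence shows the theorem is exactly the assertion that $\bar S$ is closed under transposition — although the single-matrix class $S$ is not — i.e.\ that ``$v(A_{\alpha\alpha})>0$ for every principal block'' is transposition-invariant even though ``$v(A)>0$'' alone is not.) By the first paragraph, $\bar S\subseteq E$ amounts to: for every $A\in\bar S$ of every order, the system $x>0,\ Ax\le 0$ is infeasible.

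I would prove this by induction on the order $n$, the case $n=1$ being trivial ($A=[a]$ with $a>0$). For the step, let $A\in\bar S$ be $n\times n$; every proper principal submatrix $A_{\alpha\alpha}$ lies in $\bar S$, hence in $E$ by the inductive hypothesis, so the infeasibility is already known for each $A_{\alpha\alpha}$. Assume for contradiction $x>0$ and $Ax\le 0$. If the off-diagonal part of some column $k$ of $A$ is nonnegative, then the components of $Ax\le 0$ indexed by $\bar k=\{1,\dots,n\}\setminus\{k\}$ give $A_{\bar k\bar k}x_{\bar k}\le -x_{k}A_{\bar k k}\le 0$ with $x_{\bar k}>0$, contradicting $A_{\bar k\bar k}\in E$; so we may assume every column of $A$ has a strictly negative off-diagonal entry. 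Since $A\in S$, fix $w>0$ with $Aw>0$; moving along the segment from $x$ to $w$ (or lowering the coordinates of $x$ one at a time) one drives $Az$ to acquire a zero component while $z$ stays positive, and the target is to convert this into a nonzero nonnegative vector supported on a \emph{proper} index set $\alpha$ that is sent to a nonpositive vector by $A_{\alpha\alpha}$, contradicting $A_{\alpha\alpha}\in E$. This support-reduction is the crux and the main obstacle: when the naive reduction stalls — the pivotal vector keeps full support, so that ``$(Az)_\alpha=0$'' does not restrict to a statement about the block $A_{\alpha\alpha}$ — one must use the game-theoretic content of $A\in\bar S$, namely the minimax theorem on each block together with the invariance of $v(A_{\alpha\alpha})>0$ under principal pivoting, to locate a coordinate at which $Az$ is strictly positive, yielding the required contradiction and completing the induction.
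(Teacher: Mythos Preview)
The paper does not prove this theorem at all: it is quoted in the Preliminaries with the citation \cite{cps} and used later as a black box. So there is no ``paper's own proof'' to compare against; your proposal is an attempt to supply an argument the authors deliberately outsourced to Cottle--Pang--Stone.

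On the substance of your attempt: the reduction you carry out in the first half is correct and clean. The reformulation of $A\in E$ as ``for every $\alpha$ there is no $0\neq x_\alpha\geq 0$ with $A_{\alpha\alpha}x_\alpha\leq 0$'', together with the Ville/minimax alternative, does give $A\in E\iff A^{T}\in\bar S$, and hence the whole theorem is equivalent to the single assertion that $\bar S$ is closed under transposition (equivalently $\bar S\subseteq E$). That reduction is sound.

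The genuine gap is exactly where you flag it. Your inductive proof of $\bar S\subseteq E$ disposes of the case ``some column has nonnegative off-diagonal part'' correctly, but in the remaining case you only \emph{describe} a target (``convert this into a nonzero nonnegative vector supported on a proper index set $\alpha$ with $A_{\alpha\alpha}$ sending it to a nonpositive vector'') without producing it. Interpolating between $x$ and $w$ gives a $z>0$ with $Az$ having a zero coordinate, but $z$ still has full support, so nothing has been gained toward a proper principal block. Your appeal to ``invariance of $v(A_{\alpha\alpha})>0$ under principal pivoting'' is not an argument: principal pivot transforms require nonsingularity of the pivot block, which you have not secured, and even granting it you do not explain how pivoting would manufacture the needed support reduction. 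As written, Case~2 is a plan, not a proof, and you explicitly say so (``the crux and the main obstacle''). If you want to complete this route you must actually carry out that step; alternatively, the standard treatment in \cite{cps} proceeds differently and avoids this particular difficulty.
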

\section{Main results}
We first show that hidden $Z$-matrices are invariant under principal rearrangement. 
\begin{theorem}
	Suppose $A \in R^{n \times n}$ is a hidden $Z$-matrix. Then $PAP^T$ is a hidden $Z$-matrix for any permutation matrix $P.$
\end{theorem}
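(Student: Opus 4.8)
The plan is to show directly that the witnesses certifying that $A$ is a hidden $Z$-matrix can be conjugated by $P$ to produce witnesses for $PAP^{T}$. Since $A$ is a hidden $Z$-matrix, fix $Z$-matrices $X, Y \in R^{n\times n}$ and vectors $r, s \in R^{n}_{+}$ with $AX = Y$ and $r^{T}X + s^{T}Y > 0$. I would then propose the candidates $\tilde{X} = PXP^{T}$, $\tilde{Y} = PYP^{T}$, $\tilde{r} = Pr$, and $\tilde{s} = Ps$, and verify that they satisfy the two defining conditions for $PAP^{T}$ together with the requisite sign constraints.

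First comes the algebraic identity: using $P^{T}P = I$ one has $(PAP^{T})\tilde{X} = PAP^{T}PXP^{T} = P(AX)P^{T} = PYP^{T} = \tilde{Y}$, so $(PAP^{T})\tilde{X} = \tilde{Y}$. Next, $\tilde{r} = Pr \geq 0$ and $\tilde{s} = Ps \geq 0$ because $P$ is a permutation matrix and $r, s \geq 0$; moreover $\tilde{r}^{T}\tilde{X} + \tilde{s}^{T}\tilde{Y} = r^{T}P^{T}PXP^{T} + s^{T}P^{T}PYP^{T} = (r^{T}X + s^{T}Y)P^{T}$, which is strictly positive since right multiplication by a permutation matrix merely reorders the (strictly positive) components.

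The only point needing care is that $\tilde{X}$ and $\tilde{Y}$ are again $Z$-matrices. Here I would record the elementary fact that if $P$ corresponds to the permutation $\sigma$, then $(PMP^{T})_{ij} = M_{\sigma^{-1}(i)\,\sigma^{-1}(j)}$ for any matrix $M$; hence a diagonal entry of $PMP^{T}$ equals a diagonal entry of $M$, and an off-diagonal entry equals an off-diagonal entry of $M$. Since the $Z$-property is exactly the requirement that every off-diagonal entry be nonpositive, it is preserved under $M \mapsto PMP^{T}$. Applying this to $X$ and $Y$ shows $\tilde{X}, \tilde{Y}$ are $Z$-matrices, which completes the verification and hence the proof.

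I do not anticipate a genuine obstacle: the statement is really the observation that the hidden-$Z$ property, like the $Z$-property itself, is invariant under simultaneous row and column permutation (``principal rearrangement''), and the argument is a direct conjugation. The only things to be careful about are the index bookkeeping in the identity $(PMP^{T})_{ij} = M_{\sigma^{-1}(i)\sigma^{-1}(j)}$ and the side on which the permutation acts in $(r^{T}X + s^{T}Y)P^{T}$; neither is more than routine.
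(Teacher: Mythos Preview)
Your proposal is correct and is essentially the same argument as the paper's: both conjugate the witnesses by $P$, taking $\tilde X = PXP^{T}$, $\tilde Y = PYP^{T}$, $\tilde r = Pr$, $\tilde s = Ps$, and then check $(PAP^{T})\tilde X = \tilde Y$ and $\tilde r^{T}\tilde X + \tilde s^{T}\tilde Y = (r^{T}X + s^{T}Y)P^{T} > 0$. Your write-up is in fact slightly more careful than the paper's, since you explicitly justify why $\tilde X,\tilde Y$ remain $Z$-matrices and why $\tilde r,\tilde s \in R^{n}_{+}$, points the paper leaves as ``easy to show.''
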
 
\begin{proof}
	Let $A$ be a hidden $Z$-matrix. Then by the definition of hidden $Z$-matrix there exist two $Z$-matrices $X, Y$ and two nonnegative vectors $r, s$ such that 
		$AX = Y$ and
		$r^TX + s^TY > 0.$
Now for any permutation matrix $P,$ $P^{-1}PAP^T(P^T)^{-1}X = Y.$ Thus, $(PAP^T)(P^{T})^{-1}X = PY.$ Therefore $(PAP^T)(P^{T})^{-1}XP^T = PYP^T.$ Now letting $X_1 = (P^{T})^{-1}XP^T$ and $Y_1 = PYP^T,$ we get $(PAP^T)X_1 = Y_1.$ It is easy to show that $X_1$ and $Y_1$ are $Z$-matrices. Now for $r_1, s_1 \in R^{n}_{+},$ $r_1^TX_1 + s_1^TY_1 = r^TP^T((P^{T})^{-1}XP^T) + s^TP^{-1}(PYP^T).$ Therefore $r_1^TX_1 + s_1^TY_1 = r^TXP^T + s^TYP^T = (r^TX + s^TY)P^T > 0.$ Thus $PAP^T$ satisfies the definition of hidden $Z$-matrix.
\end{proof}

Fiedler and Pt{\'a}k \cite{fiedler} proved that if there exists a vector $x > 0$ such that $Ax \geq 0$ for a $Z$-matrix $A$, then $A$ is a $P_0$-matrix. We extend this result to hidden $Z$-matrices. 
\begin{theorem} \label{p0}
	Let $A$ be a hidden $Z$-matrix with real entries. Suppose there exists a vector $x > 0$ such that $Ax \geq 0.$ Then $A$ is a $P_0$-matrix.
\end{theorem}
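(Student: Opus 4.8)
The plan is to leverage the structural decomposition of hidden $Z$-matrices provided by Theorem~\ref{hiddenz} together with the classical Fiedler--Pt{\'a}k result (Theorem~\ref{Z1}) applied to an auxiliary $K$-matrix. By definition there are $Z$-matrices $X, Y$ with $AX = Y$ and $r^TX + s^TY > 0$ for some $r, s \in R^n_+$. By Theorem~\ref{hiddenz}, $X$ is nonsingular and there is an index set $\al \subseteq \{1, 2, \cdots, n\}$ such that the matrix
\be
M = \left[\begin{array}{rr} X_{\al \al} & X_{\al \bar{\al}} \\ Y_{\bar{\al} \al} & Y_{\bar{\al} \bar{\al}} \end{array}\right]
\ee
is a $K$-matrix, i.e. a $Z$-matrix that is also a $P$-matrix. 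The key observation I would exploit is that $A$ is obtained from a principal pivot transform of the pair $(X, Y)$: writing the rows of $AX = Y$ partitioned according to $\al$ and $\bar\al$, one can rearrange the system so that $A$ is expressed through $M$ and a companion matrix $N = \left[\begin{array}{rr} Y_{\al \al} & Y_{\al \bar{\al}} \\ X_{\bar{\al} \al} & X_{\bar{\al} \bar{\al}} \end{array}\right]$; concretely $A$ (up to the principal rearrangement handled by the previous theorem) equals $N M^{-1}$ on the appropriate blocks, which is exactly a principal pivot transform relationship.

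Next I would construct a strictly positive vector witnessing that the relevant $K$-matrix-type object has value zero or more. Since $Ax \geq 0$ with $x > 0$, set $y = Y x = A X x$; the hypothesis controls $Y$ through $A$. The idea is to transfer the inequality $Ax \geq 0$ into an inequality of the form $M w \geq 0$ for some $w > 0$, where $w$ is built from $x$ by applying $X$ on the $\al$-block and leaving the $\bar\al$-block suitably chosen, using $AX = Y$ to match the $\bar\al$-rows. Because $M$ is a $K$-matrix hence in particular a $Z$-matrix, Theorem~\ref{Z1} then gives that $M$ is a $P_0$-matrix — though in fact $M$ is already a $P$-matrix, so the content is really to push the $P_0$ (nonnegativity of principal minors) conclusion from $M$ back to $A$. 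For this I would use that a principal pivot transform on a $P_0$-matrix yields a $P_0$-matrix (invariance of the $P_0$ class under PPT, as referenced via \cite{das} in the Preliminaries), combined with invariance under principal rearrangement established in the preceding theorem.

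The main obstacle I anticipate is the bookkeeping in the middle step: carefully verifying that the hypothesis $Ax \geq 0$, $x > 0$ really does produce a strictly positive vector $w$ with $Mw \geq 0$ (or an analogous statement for the matrix one actually feeds into Theorem~\ref{Z1}), since the partition mixes columns of $X$ and $Y$ and the signs must be tracked through the block inversion $M^{-1}$. A secondary subtlety is that Theorem~\ref{hiddenz} only asserts existence of \emph{some} index set $\al$, so the construction of $w$ must be done relative to that same $\al$; I would fix $\al$ first, then derive everything downstream from it. Once the positive vector is in hand and the PPT/rearrangement invariance of $P_0$ is invoked, the conclusion that $A$ is a $P_0$-matrix follows. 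An alternative route, if the block manipulation proves unwieldy, is to argue directly with the game value $v(\cdot)$: show $v$ of the associated transformed matrix is nonnegative because $Ax \geq 0$ with $x > 0$ forces a nonnegative-value certificate, then apply the nonnegative analogue of Theorem~\ref{Z} to the $Z$-matrix $M$ and transport back.
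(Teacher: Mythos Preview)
Your plan has a genuine gap at its central step. You propose to realize $A$ as a principal pivot transform of the $K$-matrix $M$ (or of $NM^{-1}$), and then invoke PPT-invariance of $P_0$. But $A$ is \emph{not} a PPT of $M$: if it were, then since $M$ is a $P$-matrix and PPT preserves the class $P$, $A$ itself would be a $P$-matrix---yet $A=0$ is hidden $Z$ (take $X=I$, $Y=0$), satisfies $Ax\geq 0$ for every $x>0$, and is $P_0$ but not $P$. What is true is that $NM^{-1}$ is a PPT of $A$; however, establishing that $NM^{-1}$ is $P_0$ is the same problem you started with, so the argument becomes circular. The related symptom is that every place you try to invoke Theorem~\ref{Z1} you feed it $M$, which is already known to be a $P$-matrix; the hypothesis $x>0$, $Ax\geq 0$ is never actually consumed.

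The paper's proof uses that hypothesis in a different place: it sets $x_1=X^{-1}x$, so $Xx_1=x>0$ and $Yx_1=Ax\geq 0$, and uses the $K$-matrix $W$ only to certify $x_1\geq 0$ (via $W^{-1}\geq 0$), which upgrades to $x_1>0$ once $X$ is shown to be a $K$-matrix. Fiedler--Pt{\'a}k (Theorem~\ref{Z1}) is then applied to the $Z$-matrix $Y$, not to $M$, yielding $Y\in K_0$ and hence $\det A=\det Y/\det X\geq 0$. This handles only the full determinant; the proper principal minors are treated by induction, showing for each $\beta$ that $A_{\bar\beta\bar\beta}$ is again hidden $Z$ (via the Schur complements $X/X_{\beta\beta}$ and $M/X_{\beta\beta}$) and inherits a strictly positive vector with nonnegative image. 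Both of these ingredients---applying Theorem~\ref{Z1} to $Y$ rather than to $M$, and the inductive Schur-complement step for principal submatrices---are missing from your outline and cannot be replaced by a PPT transport from $M$.
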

\begin{proof}
	We prove this result by induction method on $n.$ The result is trivially true for $n = 1.$ Consider that the result holds for all matrices of order less than $n.$ Now $A$ is a hidden $Z$-matrix with real entries. Then for some $Z$-matrices $X$ and $Y,$ $AX = Y.$ Then there exists a vector $x > 0$ such that $Ax \geq 0.$ This implies $YX^{-1}x \geq 0$ since $X$ is nonsingular. Let $x_1 = X^{-1}x$ which implies $Xx_1 = x > 0.$ Hence $Yx_1 \geq 0.$ Then by the Theorem \ref{hiddenz}, there exists an index set $\alpha \subseteq \{1, 2, \cdots, n\}$ such that the matrix
		$W =$$\left[\begin{array}{rr} 
		X_{\al \al} & X_{\al \bar{\al}} \\
		Y_{\bar{\al} \al} & Y_{\bar{\al} \bar{\al}} \\
		\end{array}\right]$ is a $K$-matrix
	and $Wx_1 \geq 0.$ This gives $x_1 \geq 0.$ Therefore $X$ is a $K$-matrix and for $x > 0,$ $X^{-1}x > 0$ since $X$ is nonsingular. Therefore $x_1 > 0$ with $Yx_1 \geq 0.$ Then by the Theorem \ref{Z1} of \cite{fiedler}, $Y$ is a $P_0$-matrix. This implies $Y$ is a $K_0$-matrix. Therefore det $A \geq 0.$ Now it is sufficient to prove that for any $\bar{\beta} \subset \{1, 2, \cdots, n\}$, the principal submatrix $A_{\bar{\beta}\bar{\beta}}$ of $A$ is a hidden $Z$-matrix and there exists a $y > 0$ such that $A_{\bar{\beta}\bar{\beta}}y \geq 0.$ Now
		$A_{\bar{\beta}\bar{\beta}}(X_{\bar{\beta}\bar{\beta}} - X_{\bar{\beta}\beta}X_{\beta \beta}^{-1}X_{\beta\bar{\beta}}) = Y_{\bar{\beta}\bar{\beta}} - Y_{\bar{\beta}\beta}X_{\beta \beta}^{-1}X_{\beta \bar{\beta}}$
	which implies
		$A_{\bar{\beta}\bar{\beta}}(X/X_{\beta \beta}) = (M/X_{\beta\beta})$, where $M =$$\left[\begin{array}{rr} 
		X_{\beta \beta} & X_{\beta \bar{\beta}} \\
		Y_{\bar{\beta} \beta} & Y_{\bar{\beta} \bar{\beta}} \\
		\end{array}\right].$
	It is easy to show that $(M/X_{\beta\beta})$ is a $Z$-matrix. Since $(X/X_{\beta\beta})$ is a $K$-matrix, therefore $A_{\bar{\beta}\bar{\beta}}$ is a hidden $Z$-matrix. Consider $x_1=$  $\left[\begin{array}{r} 
	u_{\beta}\\
	u_{\bar{\beta}}
	\end{array}\right].$ Then
	\begin{center}
		$\left[\begin{array}{rr} 
		-X_{\bar{\beta}\beta}X_{\beta \beta}^{-1} & I
		\end{array}\right]$$\left[\begin{array}{rr} 
		X_{\beta \beta}  & X_{\beta \bar{\beta}}\\
		X_{\bar{\beta}\beta} &  X_{\bar{\beta}\bar{\beta}}
		\end{array}\right]$$\left[\begin{array}{r} 
		u_{\beta}\\
		u_{\bar{\beta}}
		\end{array}\right]=$$\left[\begin{array}{rr} 
		0 & X/X_{\beta \beta}
		\end{array}\right]$$\left[\begin{array}{r} 
		u_{\beta}\\
		u_{\bar{\beta}}
		\end{array}\right].$
	\end{center}
	Now as $\left[\begin{array}{rr} 
	-X_{\bar{\beta}\beta}X_{\beta \beta}^{-1} & I
	\end{array}\right]$$\left[\begin{array}{rr} 
	X_{\beta \beta}  & X_{\beta \bar{\beta}}\\
	X_{\bar{\beta}\beta} &  X_{\bar{\beta}\bar{\beta}}
	\end{array}\right]$$\left[\begin{array}{r} 
	u_{\beta}\\
	u_{\bar{\beta}}
	\end{array}\right] > 0,$ then $(X/X_{\beta \beta})u_{\bar{\beta}} >0.$ Now consider $y_{\bar{\beta}}=(X/X_{\beta \beta})u_{\bar{\beta}} >0.$ Again $\left[\begin{array}{rr} 
	0 & M/X_{\beta \beta}
	\end{array}\right]$$\left[\begin{array}{r} 
	u_{\beta}\\
	u_{\bar{\beta}}
	\end{array}\right] \geq 0.$ Therefore $(M/X_{\beta \beta})u_{\bar{\beta}} \geq 0.$ So for $y_{\bar{\beta}}>0,$ $A_{\bar{\beta}\bar{\beta}}y_{\bar{\beta}} \geq 0.$ This implies det $A_{\bar{\beta} \bar{\beta}} \geq 0.$ Therefore $A$ is a $P_0$-matrix.
\end{proof}
\begin{remk}
	The above result may not hold if the condition $x > 0$ is changed to $x \geq 0$. To illustrate our result we consider $A=$  $\left[\begin{array}{rr} 
	-1  & 0\\
	-1 &  2
	\end{array}\right].$ It is easy to show that there exists an $x \geq 0$ such that $Ax \geq 0$.
	Note that $A$ is a hidden $Z$-matrix but not a $P_0$-matrix..
\end{remk}
\begin{corol}
	Let $A$ be a hidden $Z$-matrix with real entries and there exists a vector $x > 0$ such that $Ax \geq 0$. Then every Schur complement in $A$ is a hidden $Z$-matrix as well as $P_0$-matrix.
\end{corol}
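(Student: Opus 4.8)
The plan is to deduce the corollary from Theorem~\ref{p0} together with the construction used in its proof, by passing through the principal pivot transform: a Schur complement of $A$ is a principal submatrix of a principal pivot transform of $A$, and I will show that transform is again a hidden $Z$-matrix of the same ``good'' type. The first step is to record the extra information already proved inside Theorem~\ref{p0}: if $AX=Y$ with $X,Y$ $Z$-matrices, then the hypothesis forces $X$ to be a $K$-matrix and $Y$ to be a $K_0$-matrix, and the vector $x_1:=X^{-1}x$ is strictly positive with $Yx_1=Ax\geq 0$.

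Now fix an index set $\alpha$ with $\det A_{\alpha\alpha}\neq 0$ (if this minor vanishes the Schur complement is undefined, and for $\alpha=\emptyset$ there is nothing to prove), and set $S=A/A_{\alpha\alpha}$, so that $S$ is the $\bar{\alpha}\bar{\alpha}$-block of the principal pivot transform $\mathrm{ppt}_\alpha(A)$. Let $\hat{X}$ be obtained from $X$ by replacing the rows indexed by $\alpha$ with the corresponding rows of $Y$, and $\hat{Y}$ from $Y$ by the opposite replacement. Applying the defining exchange property of $\mathrm{ppt}_\alpha$ column by column to $AX=Y$ yields $\mathrm{ppt}_\alpha(A)\,\hat{X}=\hat{Y}$; since $X$ and $Y$ are $Z$-matrices so are $\hat{X}$ and $\hat{Y}$, and rearranging the entries of $r$ and $s$ accordingly gives $\hat{r},\hat{s}\geq 0$ with $\hat{r}^{T}\hat{X}+\hat{s}^{T}\hat{Y}=r^{T}X+s^{T}Y>0$. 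Hence $\mathrm{ppt}_\alpha(A)$ is a hidden $Z$-matrix, so Theorem~\ref{hiddenz}(i) makes $\hat{X}$ nonsingular; and since $\hat{X}x_1$ has $\alpha$-part $(Ax)_\alpha\geq 0$ and $\bar{\alpha}$-part $x_{\bar{\alpha}}>0$, Theorem~\ref{Z1} shows $\hat{X}$ is a $K_0$-matrix, hence a $K$-matrix.

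At this point I would repeat, verbatim, the Schur-complement computation from the proof of Theorem~\ref{p0}, with $\mathrm{ppt}_\alpha(A),\hat{X},\hat{Y},x_1$ playing the roles of $A,X,Y,x_1$ and with the index set $\alpha$. Since $\hat{X}$ is a $K$-matrix, $\hat{X}_{\alpha\alpha}$ is a $K$-matrix (so invertible), $\hat{X}/\hat{X}_{\alpha\alpha}$ is a $K$-matrix, and the mixed matrix $\hat{M}=\left[\begin{array}{cc}\hat{X}_{\alpha\alpha}&\hat{X}_{\alpha\bar{\alpha}}\\ \hat{Y}_{\bar{\alpha}\alpha}&\hat{Y}_{\bar{\alpha}\bar{\alpha}}\end{array}\right]$ is a $Z$-matrix, whence $\hat{M}/\hat{X}_{\alpha\alpha}$ is a $Z$-matrix; the identity $S\,(\hat{X}/\hat{X}_{\alpha\alpha})=\hat{M}/\hat{X}_{\alpha\alpha}$ then displays $S$ as a hidden $Z$-matrix. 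Feeding the same computation the vector $x_1$ (whose images $\hat{X}x_1$ and $\hat{Y}x_1$ have $\bar{\alpha}$-parts $x_{\bar{\alpha}}>0$ and $(Ax)_{\bar{\alpha}}\geq 0$) produces a $y>0$ with $Sy\geq 0$, so $S$ is a $P_0$-matrix by Theorem~\ref{p0}. Alternatively the $P_0$ part is immediate, since $A\in P_0$ by Theorem~\ref{p0} and a Schur complement of a $P_0$-matrix is a $P_0$-matrix.

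I expect the only real obstacle to be the passage to the pivot transform, namely checking that $\mathrm{ppt}_\alpha(A)$ is a hidden $Z$-matrix and, above all, that its witness $\hat{X}$ is a genuine $K$-matrix and not merely a $K_0$-matrix: it is $K$-ness of the witness that drives the Schur-complement step inherited from Theorem~\ref{p0}. This is exactly the point where the nonsingularity supplied by Theorem~\ref{hiddenz}(i) has to be combined with the $P_0$-conclusion of Theorem~\ref{Z1}; as the Remark after Theorem~\ref{p0} shows, the whole argument collapses if $x>0$ is relaxed to $x\geq 0$.
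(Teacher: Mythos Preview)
Your argument is correct. The paper states this corollary without proof, presumably regarding it as a by-product of the proof of Theorem~\ref{p0}; your write-up supplies the missing details in a way fully consistent with the paper's methods.

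A couple of remarks. For the $P_0$ half, your final one-line alternative is really all that is needed: Theorem~\ref{p0} gives $A\in P_0$, and every principal minor of the Schur complement $A/A_{\alpha\alpha}$ equals a ratio $\det A_{\gamma\gamma}/\det A_{\alpha\alpha}$ with $\alpha\subseteq\gamma$, which is nonnegative since $\det A_{\alpha\alpha}>0$. For the hidden-$Z$ half, your detour through $\mathrm{ppt}_\alpha(A)$ is exactly the right idea and is already implicit in the paper, which elsewhere uses that principal pivot transforms preserve the hidden-$Z$ property. The only genuinely new ingredient beyond Theorem~\ref{p0} is your observation that the witness $\hat{X}$ for $\mathrm{ppt}_\alpha(A)$ is again a $K$-matrix: you get $K_0$ from Theorem~\ref{Z1} via $x_1>0$, $\hat{X}x_1\geq 0$, and then upgrade to $K$ using nonsingularity from Theorem~\ref{hiddenz}(i). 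That step is sound (a nonsingular $Z\cap P_0$ matrix is a $K$-matrix, since writing it as $sI-B$ with $B\geq 0$ forces $s>\rho(B)$), and once $\hat{X}$ is $K$ the principal-submatrix computation of Theorem~\ref{p0} carries over verbatim to exhibit $S=(\mathrm{ppt}_\alpha(A))_{\bar\alpha\bar\alpha}$ as hidden $Z$, with the positivity condition witnessed simply by the $K$-matrix $\hat{X}/\hat{X}_{\alpha\alpha}$.
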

\begin{corol}
	Let $A$ be a hidden $Z$-matrix with real entries. If there exists a vector $x > 0$ such that $Ax \geq 0$, then the class of all the linear complementarity problems with the matrix $A$ is NP-complete.
\end{corol}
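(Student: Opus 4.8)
The plan is to establish the two conditions that together constitute NP-completeness, treating the problem in its decision form: with the matrix $A$ held fixed and a rational vector $q$ as the varying input, decide whether $\text{SOL}(q, A) \neq \emptyset$, the instance size being the bit length of $q$ together with that of the fixed data $A, X, Y, r, s$.

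First I would settle membership in NP by taking a solution $z$ itself as the certificate. Verifying the three defining relations $z \geq 0$, $q + Az \geq 0$, and $z^T(q + Az) = 0$ needs only matrix-vector products and sign comparisons, each computable in time polynomial in the input length. To see that the certificate has polynomial size, note that by Theorem \ref{p0} the matrix $A$ is a $P_0$-matrix, so any solution of $\text{LCP}(q, A)$ arises as the solution of a square linear system determined by its complementary (support) index set; by the standard bound on the bit complexity of solutions of rational linear systems, such a $z$ has encoding length polynomial in the input. Hence the problem lies in NP.

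The decisive step is NP-hardness, and this is where I expect the genuine difficulty. The template would be a polynomial-time many-one reduction from a problem already known to be NP-complete, the most natural source being the feasibility problem for the general linear complementarity problem, established NP-complete by Chung, or a canonical combinatorial problem such as \textsc{Partition}; each instance would be mapped to a vector $q$ so that $\text{SOL}(q, A) \neq \emptyset$ encodes a ``yes'' answer. The main obstacle is precisely that the hypotheses on $A$ are strong. Theorem \ref{p0} forces $A$ to be a $P_0$-matrix, and Theorem \ref{hiddenz} reduces the hidden $Z$ structure to a $K$- (respectively $K_0$-) matrix configuration whose complementary-cone geometry is highly regular; since $A$ is held fixed, the entire NP-hard combinatorial content must be routed through $q$ alone against this rigid complementarity structure. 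Reconciling this with the tractability that the hidden $Z$ and $P_0$ properties otherwise confer on $\text{LCP}(q, A)$ is the crux, and I would expect this reduction, rather than the NP-membership half, to demand essentially all of the work.
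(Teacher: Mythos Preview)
Your proposal contains a genuine gap, and it originates in the framing. By holding $A$ fixed and letting only $q$ vary, you also fix the dimension $n$; there are then at most $2^n$ complementary index sets, and for each one deciding feasibility is a single linear-system solve followed by sign checks. That is a constant number of polynomial-size subproblems in the bit-length of $q$, so the decision problem lies in $P$, and no NP-hardness reduction routed through $q$ alone can succeed. Your closing paragraph in effect senses this obstruction (``the entire NP-hard combinatorial content must be routed through $q$ alone against this rigid complementarity structure'') but does not resolve it; the promised reduction is never supplied.

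The paper's proof is a two-line citation and reads the statement as concerning a class of inputs rather than a single fixed $A$: Theorem~\ref{p0} yields $A \in P_0$, and then result~3.4 of \cite{kojima} is invoked, which asserts that the linear complementarity problem over the class of $P_0$-matrices is NP-complete. No reduction is constructed in the paper --- all of the hardness content is delegated to the cited reference. The step you did carry out, namely deducing $A \in P_0$ from Theorem~\ref{p0}, is exactly the paper's first line; the second line is simply the citation you were missing.
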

\begin{proof}
	Suppose $A$ is a hidden $Z$-matrix with real entries and there exists a vector $x > 0$ such that $Ax \geq 0.$ Then it follows from Theorem \ref{p0}, $A$ is a $P_0$-matrix. Now by 3.4 of \cite{kojima} the class of LCP$(q, A)$ is NP-complete.
\end{proof}
Consider a singular matrix $A=$ $\left[\begin{array}{rr} 
1 & 1 \\
1 & 1 
\end{array}\right].$ It is easy to show that $v(A) > 0.$ We show the following result in the context of singular hidden $Z$-matrices.
\begin{theorem}\label{theorem}
	Let $A$ be a singular hidden $Z$-matrix. Then $v(A) \not> 0.$
\end{theorem}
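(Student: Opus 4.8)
The plan is to argue by contradiction: assume $A$ is a singular hidden $Z$-matrix with $v(A)>0$ and derive that $A$ is in fact nonsingular. From $v(A)>0$ I get a vector $0\neq x\geq 0$ with $Ax>0$. The first step is to upgrade this to a strictly positive witness: because every coordinate of $Ax$ is strictly positive, for all sufficiently small $\varepsilon>0$ one still has $A(x+\varepsilon e)=Ax+\varepsilon Ae>0$, while $x+\varepsilon e>0$; hence $A$ is an $S$-matrix. Then I would invoke Theorem \ref{pangps} (for a hidden $Z$-matrix, being a $P$-matrix is equivalent to being an $S$-matrix) to conclude that $A$ is a $P$-matrix, so $\det A>0$ and $A$ is nonsingular --- contradicting the hypothesis. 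Hence $v(A)\not>0$.

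A second, self-contained route that bypasses Pang's characterization goes through the structure Theorem \ref{hiddenz}. Writing $AX=Y$ with $X,Y$ the witnessing $Z$-matrices, let $\alpha$ and $W=\left[\begin{array}{rr} X_{\alpha\alpha} & X_{\alpha\bar{\alpha}}\\ Y_{\bar{\alpha}\alpha} & Y_{\bar{\alpha}\bar{\alpha}}\end{array}\right]$ be as in that theorem, so $W$ is a $K$-matrix and $X$ is nonsingular. Starting from $x>0$ with $Ax>0$ (obtained by the perturbation above) and setting $x_1=X^{-1}x$, I would check $(Wx_1)_{\alpha}=x_{\alpha}>0$ and $(Wx_1)_{\bar{\alpha}}=(Yx_1)_{\bar{\alpha}}=(Ax)_{\bar{\alpha}}>0$, so $Wx_1>0$; since $W^{-1}\geq 0$ and is nonsingular, every coordinate of $x_1=W^{-1}(Wx_1)$ is positive, i.e. $x_1>0$. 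But then the $Z$-matrix $Y$ satisfies $Yx_1=Ax>0$ for a strictly positive $x_1$, so $Y$ is a $K$-matrix and $\det Y>0$, whereas $\det Y=\det A\cdot\det X=0$ because $A$ is singular --- the same contradiction.

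The only genuinely delicate point --- the part I expect to be the main obstacle --- is precisely the passage from the non-strict witness $x\geq 0$ supplied by $v(A)>0$ to a strictly positive witness, that is, recognizing that for a (hidden) $Z$-matrix the condition $v(A)>0$ is the same as the $S$-matrix property; once the perturbation $x\mapsto x+\varepsilon e$ is justified, the result drops out of an already-quoted theorem. As a sanity check, the singular matrix $\left[\begin{array}{rr} 1 & 1\\ 1 & 1\end{array}\right]$ displayed just before the statement has $v(A)>0$, so the theorem predicts --- correctly --- that it is not a hidden $Z$-matrix.
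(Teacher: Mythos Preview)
Your proposal is correct, and your second route is essentially the paper's own argument: the paper performs exactly the perturbation $x\mapsto x+\varepsilon e$ to obtain $\tilde{x}>0$ with $A\tilde{x}>0$, sets $\tilde{x}_1=X^{-1}\tilde{x}$, uses the $K$-matrix $W$ from Theorem~\ref{hiddenz} to conclude $\tilde{x}_1>0$, and then applies Fiedler--Pt{\'a}k (Theorem~\ref{Z}) to $Y$ (and $X$) to get $\det Y>0$, contradicting $\det A=0$.

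Your first route, invoking Pang's characterization (Theorem~\ref{pangps}) directly once $A$ is seen to be an $S$-matrix, is genuinely shorter than what the paper does for this theorem. Interestingly, the paper postpones that exact argument to the \emph{next} result (Theorem~\ref{maintheorem}), where it cites the present theorem only for the perturbation step and then appeals to Theorem~\ref{pangps}; you have simply observed that this shortcut already suffices here, making the structural detour through $W$ unnecessary. What the paper's longer route buys is a more self-contained derivation that does not rely on Pang's equivalence, at the cost of rederiving part of it.

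One small remark on your closing paragraph: the passage from a witness $0\neq x\ge 0$ with $Ax>0$ to a strictly positive one via $x+\varepsilon e$ works for \emph{any} real matrix, so $v(A)>0$ and the $S$-property are equivalent in general; this step does not use the hidden-$Z$ hypothesis at all. The hidden-$Z$ structure enters only when you invoke Theorem~\ref{pangps} (first route) or Theorem~\ref{hiddenz} (second route).
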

\begin{proof}
	We prove this result by contradiction. Let $A$ be a singular hidden $Z$-matrix and $v(A) > 0.$
	We show that there exists an $ \tilde{x}>0$ such that $A \tilde {x} > 0.$ By definition of value positivity there exists an $x \in R^n_{+}$ such that $Ax > 0.$ Let $\tilde x = x + \epsilon e > 0,$ where $\epsilon > 0.$ Then $A \tilde x = A(x + \epsilon e) = Ax + \epsilon Ae.$ If $Ae\geq 0$, it is enough to choose  any $\epsilon >0.$ If not. Let $a = \min_i (Ax)_i >0$ and $b = \max_i |(Ae)_i|.$ Now choose $\epsilon$ such that $a > \epsilon b.$ This implies $\epsilon < a/b.$ Now for $0 < \epsilon < a/b,$ we can get $ \tilde x = x + \epsilon e$ such that $ \tilde x > 0$ and $A \tilde x > 0.$ Now $A$ is a hidden $Z$-matrix with $v(A) > 0.$ We say that there exists an $\tilde x > 0$ such that $A \tilde x > 0.$ Again as $A$ is hidden $Z$-matrix then for some $Z$-matrices $X$ and $Y,$ $AX = Y.$ Since $X$ is nonsingular by the Theorem \ref{hiddenz}, then $YX^{-1}\tilde{x} > 0.$ Let $\tilde{x}_1 = X^{-1}\tilde{x}.$ Then $Y \tilde{x}_1 > 0$ and $X \tilde{x}_1 > 0.$ Then by the Theorem \ref{hiddenz}, there exists an index set $\alpha \subseteq \{1, 2, \cdots, n\}$ such that the matrix
		$W =$$\left[\begin{array}{rr} 
		X_{\al \al} & X_{\al \bar{\al}} \\
		Y_{\bar{\al} \al} & Y_{\bar{\al} \bar{\al}} \\
		\end{array}\right]$ is a $K$-matrix
	and $W \tilde{x}_1 > 0.$ Let $\tilde{x}_2 = W \tilde{x}_1 > 0.$ Then $\tilde{x}_1 = W^{-1} \tilde{x}_2 > 0$ since $W^{-1} \geq 0.$ Hence for any $\tilde{x}_1 \geq 0,$ $X \tilde{x}_1 > 0$ and $Y \tilde{x}_1 > 0.$ Therefore $v(X) > 0$ and $v(Y) > 0.$ Now as $X$ and $Y$ are $Z$-matrices then by the Theorem \ref{Z}, $X$ and $Y$ are $P$-matrices. Thus we have $\text{det}\, Y > 0$ and $\text{det}\, X^{-1} > 0.$ Therefore det$A >0$ which contradicts the fact that $A$ is singular matrix.   
\end{proof}
We consider a singular hidden $Z$-matrix $A$ to show that $v(A) \not> 0$ with the help of the Theorem \ref{theorem}.
\begin{examp}
 	Let $A=$ $\left[\begin{array}{rrr} 
 	1 & 1 & 0\\
   -1 & -1 & 0\\
 	0 & 0 & 1\\
 	\end{array}\right].$ Note that $A$ is singular matrix. Now $A$ is hidden a $Z$-matrix with 
 	$X=$ $\left[\begin{array}{rrr} 
 	2 & -1 & 0\\
 	-1 & 1 & 0\\
 	0 & -1 & 3\\
 	\end{array}\right]$ 
    and
 	$Y=$ $\left[\begin{array}{rrr} 
 	1 & 0 & 0\\
 	-1 & 0 & 0\\
 	0 & -1 & 3\\
 	\end{array}\right].$ Take $r=$ $\left[\begin{array}{r} 
 	1.6\\
 	4\\
 	2\\
 	\end{array}\right]$ and $s=$ $\left[\begin{array}{r} 
 	4\\
 	0\\
 	0.1\\
 	\end{array}\right],$ then it is easy to show $r^TX + s^TY > 0.$ Then by Theorem \ref{maintheorem}, $v(A) \not> 0$.
 \end{examp}

 Neogy et al. \cite{dubey} show that if $A$ is a hidden $Z$-matrix with $v(A) > 0$ and some additional assumptions, then $A$ is an $E_0$-matrix. In this paper we show that a hidden $Z$-matrix with $v(A)>0$ is a $P$-matrix.

\begin{theorem} \label{maintheorem}
	Let $A$ be a hidden $Z$-matrix and $v(A) > 0.$ Then $A$ is a $P$-matrix.
\end{theorem}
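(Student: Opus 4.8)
The plan is to reduce the statement to Theorem \ref{pangps}, which asserts that a hidden $Z$-matrix is a $P$-matrix if and only if it is an $S$-matrix. Thus it suffices to show that the hypothesis $v(A)>0$ forces $A$ to be an $S$-matrix, i.e. to produce a vector $\tilde{x}>0$ with $A\tilde{x}>0$.

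First I would unwind the hypothesis using the characterization of value positivity recalled in Section 2: $v(A)>0$ means there is some $0\neq x\geq 0$ with $Ax>0$. Such an $x$ need not be strictly positive, so the second step is exactly the perturbation already used in the proof of Theorem \ref{theorem}. Set $\tilde{x}=x+\epsilon e$ with $\epsilon>0$, so that $A\tilde{x}=Ax+\epsilon Ae$. If $Ae\geq 0$ any $\epsilon>0$ works; otherwise put $a=\min_i(Ax)_i>0$ and $b=\max_i|(Ae)_i|$, and for any $0<\epsilon<a/b$ one gets $\tilde{x}>0$ and $A\tilde{x}>0$. Hence $A$ is an $S$-matrix.

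Finally, applying Theorem \ref{pangps} to $A$, which is a hidden $Z$-matrix and has just been shown to be an $S$-matrix, yields that $A$ is a $P$-matrix, completing the argument.

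I do not expect a genuine obstacle here: the proof is essentially a splice of the value-positivity definition, the elementary perturbation from Theorem \ref{theorem}, and the cited equivalence. If one instead wanted a proof that does not invoke Theorem \ref{pangps}, the route would be heavier: pass via Theorem \ref{hiddenz} to the $Z$-matrices $X,Y$ with $AX=Y$ and the associated $K$-matrix $W$, argue as in Theorem \ref{theorem} that $v(X)>0$ and $v(Y)>0$ so that $X,Y$ are $P$-matrices by Theorem \ref{Z} and $\det A>0$, and then re-run the Schur-complement reduction of Theorem \ref{p0} to show each principal submatrix $A_{\bar{\beta}\bar{\beta}}$ is again a hidden $Z$-matrix carrying a strictly positive vector to a strictly positive vector, hence of positive determinant; the bookkeeping on principal submatrices would be the main nuisance in that alternative.
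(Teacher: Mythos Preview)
Your proposal is correct and follows essentially the same route as the paper: from $v(A)>0$ obtain $x\ge 0$ with $Ax>0$, perturb to a strictly positive $\tilde{x}$ with $A\tilde{x}>0$ (the paper invokes this step by referring back to the argument in Theorem~\ref{theorem}), and then conclude via Theorem~\ref{pangps}. The alternative self-contained route you sketch at the end is unnecessary here but accurately described.
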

\begin{proof}
 	Let $A \in R^{n \times n}$ with $v(A) > 0.$ Then there exists an $x\in R^n_{+}$ such that $Ax>0.$ In view of Theorem \ref{theorem}  $\exists \ \tilde{x}>0 $ such that $A\tilde{x}>0.$ Then by Theorem \ref{pangps}, $A$ is a $P$-matrix.
\end{proof}
\begin{remk}
For a hidden $Z$-matrix $A$ with $v(A)>0,$ LCP$(q,A)$ is processable by criss-cross method \cite{jana}.
\end{remk}
Now we illustrate our result considering the following example.
\begin{examp}
	Let $A=$ $\left[\begin{array}{rrr} 
	1 & 2 & 0\\
	0 & 1 & 0\\
	-1 & 0 & 1\\
	\end{array}\right].$ For $X=$ $\left[\begin{array}{rrr} 
	1 & -2 & 0\\
	0 & 1 & 0\\
	-1 & -2 & 1\\
	\end{array}\right]$ and \\
	$Y=$ $\left[\begin{array}{rrr} 
	1 & 0 & 0\\
	0 & 1 & 0\\
	-2 & 0 & 1\\
	\end{array}\right],$ $r=$ $\left[\begin{array}{r} 
	3\\
	8\\
	0\\
	\end{array}\right]$ and $s=$ $\left[\begin{array}{r} 
	0\\
	0\\
	1\\
	\end{array}\right],$ we obtain $r^TX + s^TY >0.$ Hence $A$ is a hidden $Z$-matrix. For $x=$ $\left[\begin{array}{r} 
	1\\
	4\\
	5\\
	\end{array}\right],$   $v(A)>0.$ Therefore by Theorem \ref{maintheorem}, the system LCP$(q, A)$ has a unique solution for each $q \in R^n.$
\end{examp}
Here we propose a method to find whether a hidden $Z$-matrix $A$ is $P$-matrix or not.\\
\textbf{Algorithm:}\\ 
\text{Step I:} Choose $\epsilon,\; \delta > 0.$ Consider the following linear programming problem
\be \label{lp}
\begin{array}{ll}
\text{minimize} & s \\
\text{subject to}& Ax - se \geq 0\\
& x \geq \delta e\\
& s \geq \epsilon
\end{array}
\ee
If solution of the linear programming problem exists then by Theorem \ref{maintheorem}, $A$ is a $P$-matrix, else go to Step II.
\vsp
\noindent \text{Step II:} Choose $\epsilon = 0,\; \delta > 0$ and consider the linear programming problem (\ref{lp}). If the solution of the linear programming problem exists then by Theorem \ref{p0}, $A$ is a $P_0$-matrix, else decision is inconclusive. 
\vsp
Note that all $2 \times 2$ $P$-matrices are hidden $Z$ but in general there are $P$-matrices which are not hidden $Z$ \cite{pang}. Now we show the condition under which a $P$-matrix is a hidden $Z$-matrix. For this purpose we consider the definition of $D$-matrix.
\begin{defn}
A matrix $A \in R^{n \times n}$ is said to be type $D$ \cite{markham} if there exist some real numbers $\{\al_i\}_{i = 1}^{n}$ with $\al_n > \al_{n-1}> \cdots > \al_1,$ such that
\begin{center}
	$a_{ij} = \begin{cases}
	\al_i \ \text{if} \ i \leq j;\\
	\al_j \ \text{if} \ i > j.
	\end{cases}$
\end{center} 
\end{defn}
\begin{theorem}
Suppose $A$ is positive type $D$-matrix. Then $A$ is a hidden $Z$-matrix.
\end{theorem}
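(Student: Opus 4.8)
The plan is to produce explicit $Z$-matrices $X,Y$ together with nonnegative vectors $r,s$ witnessing the definition, the point being that a positive type $D$-matrix has a Cholesky-type factorization whose associated inverse is visibly a $Z$-matrix. First I would write $A=LBL^{T}$, where $L\in R^{n\times n}$ is the lower-triangular matrix all of whose entries on and below the main diagonal equal $1$, and $B=\mathrm{diag}(\beta_1,\dots,\beta_n)$ with $\beta_1=\al_1$ and $\beta_k=\al_k-\al_{k-1}$ for $2\le k\le n$. A one-line check gives $(LBL^{T})_{ij}=\sum_{k\le\min(i,j)}\beta_k=\al_{\min(i,j)}=a_{ij}$, so the factorization holds. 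Since $\al_1<\al_2<\cdots<\al_n$ and $A$ is positive --- the least entry of a type $D$-matrix is $\al_1$, so positivity of $A$ just means $\al_1>0$ --- every $\beta_k$ is strictly positive. Hence $B$ is nonsingular, and therefore $A$ is nonsingular with $A^{-1}=L^{-T}B^{-1}L^{-1}$.

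The next, and principal, step is to check that $A^{-1}$ is a $Z$-matrix. Here $L^{-1}$ is the bidiagonal matrix with $1$'s on the main diagonal and $-1$'s on the first subdiagonal, and $B^{-1}=\mathrm{diag}(\beta_1^{-1},\dots,\beta_n^{-1})$. Carrying out the two products shows that $A^{-1}$ is tridiagonal: its diagonal entries are $\beta_i^{-1}+\beta_{i+1}^{-1}$ for $1\le i\le n-1$ and $\beta_n^{-1}$ in position $(n,n)$, its entries in positions $(i,i+1)$ and $(i+1,i)$ equal $-\beta_{i+1}^{-1}$ for $1\le i\le n-1$, and all remaining entries are $0$. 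In particular every off-diagonal entry of $A^{-1}$ is nonpositive, so $A^{-1}$ is a $Z$-matrix.

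With this in hand the proof concludes at once. Put $X=A^{-1}$ and $Y=I_n$; both are $Z$-matrices (the identity trivially so), and $AX=AA^{-1}=I_n=Y$. Choosing $r=0\in R^{n}_{+}$ and $s=e\in R^{n}_{+}$ gives $r^{T}X+s^{T}Y=0+e^{T}I_n=e^{T}>0$, so $A$ meets the definition of a hidden $Z$-matrix.

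The only genuine obstacle is verifying that $A^{-1}$ is a $Z$-matrix; the factorization $A=LBL^{T}$ is exactly what turns this into a short computation rather than an argument about signs of minors, and everything else is routine. (Should one insist on a strictly positive multiplier, one may instead keep $r>0$ arbitrary and take $s=c\,e$ with $c>0$ large enough to dominate the entries of $r^{T}A^{-1}$, but the choice $r=0$ already suffices under the definition used here.)
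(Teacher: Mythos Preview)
Your argument is correct and, like the paper's, rests on the single fact that $A^{-1}$ is a $Z$-matrix. The execution differs, however. The paper does not compute anything: it cites Markham \cite{markham} for the statement that the inverse of a positive type $D$-matrix is a $Z$-matrix, observes that any $Z$-matrix is trivially hidden $Z$, and then appeals to the invariance of the hidden $Z$ property under principal pivot transforms (so $A=(A^{-1})^{-1}$ inherits it). Your route is more self-contained: the factorization $A=LBL^{T}$ makes the tridiagonal form of $A^{-1}$ explicit, after which taking $X=A^{-1}$, $Y=I$, $r=0$, $s=e$ verifies the hidden $Z$ definition directly, bypassing both the Markham citation and the PPT-invariance machinery. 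The paper's version is shorter on the page but relies on two external results; yours is elementary and would stand on its own even without those references.
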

\begin{proof}
 Suppose $A$ is positive type $D$-matrix. It is easy to show that positive type $D$-matrices are $P$-matrices. Then $A$ is nonsingular and $A^{-1}$ is $Z$-matrix as shown in \cite{markham} which in turn implies $A^{-1}$ is hidden $Z$-matrix. Now $A$ is a PPT of $A^{-1}$ and PPT of a hidden $Z$-matrix is hidden $Z$ \cite{dubey}. Therefore $A$ is a hidden $Z$-matrix. 
\end{proof}

It is known that inverse of an almost $P$-matrix is an $N$-matrix. To illustrate our result we consider $A=$  $\left[\begin{array}{rr} 
1  & 2\\
1 &  1
\end{array}\right].$ It is easy to show that $A$ is an almost $P$-matrix and $A^{-1}$ is an $N$ of first category. For further details see \cite{neogy2}. Now we prove the following theorem.
 \begin{theorem}
	Let $A$ be a hidden $Z$-matrix with real entries. If $A$ is an almost $P$-matrix then $A^{-1}$ is an $N$-matrix of second category. 
\end{theorem}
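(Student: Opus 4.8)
The plan is to factor the problem through the category of the $N$-matrix $A^{-1}$, spending the hidden $Z$ hypothesis at a single point — to show that $A^{-1}$ is not an $S$-matrix. Since $A$ is an almost $P$-matrix, all of its proper principal minors are positive and $\det A<0$, so $A$ is nonsingular and, by the inverse statement recalled just before the theorem, $A^{-1}$ is an $N$-matrix. If one wants this in place rather than cited, it is Jacobi's identity: for each $\emptyset\neq\gamma\subseteq\{1,\dots,n\}$ one has $\det\big((A^{-1})_{\gamma\gamma}\big)=\det(A_{\bar{\gamma}\bar{\gamma}})/\det A$, and the numerator is a positive proper principal minor of $A$ when $\bar{\gamma}\neq\emptyset$ (and equals $1$ when $\gamma=\{1,\dots,n\}$), while $\det A<0$, so every principal minor of $A^{-1}$ is negative.

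Next I would note that $A^{-1}$ is again a hidden $Z$-matrix. By definition there are $Z$-matrices $X,Y$ and $r,s\in R^{n}_{+}$ with $AX=Y$ and $r^{T}X+s^{T}Y>0$; by Theorem~\ref{hiddenz} $X$ is nonsingular, hence so is $Y=AX$, and $A^{-1}Y=X$. Thus, putting $X_{1}=Y$, $Y_{1}=X$, $r_{1}=s$, $s_{1}=r$, we get $A^{-1}X_{1}=Y_{1}$ with $X_{1},Y_{1}$ $Z$-matrices and $r_{1}^{T}X_{1}+s_{1}^{T}Y_{1}=r^{T}X+s^{T}Y>0$, so $A^{-1}$ is hidden $Z$.

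Now the hidden $Z$ hypothesis is used in one line: being an $N$-matrix, $A^{-1}$ has all principal minors negative, hence is not a $P$-matrix, and a hidden $Z$-matrix that is not a $P$-matrix is not an $S$-matrix by Theorem~\ref{pangps}. So $A^{-1}$ is an $N$-matrix that is not an $S$-matrix. It then remains to invoke the known fact that an $N$-matrix which is not an $S$-matrix is necessarily of the second category — equivalently, every $N$-matrix of the first category is an $S$-matrix; see \cite{neogy2}. This yields that $A^{-1}$ is an $N$-matrix of the second category, as claimed. (One can instead get ``$A^{-1}\notin S$'' by observing directly that $A\notin S$: otherwise $A$, being hidden $Z$ and an $S$-matrix, would be a $P$-matrix by Theorem~\ref{pangps}, contradicting $\det A<0$; one then applies the inverse form of the same dichotomy.)

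The step I expect to carry the weight is the last one: that a first-category $N$-matrix is an $S$-matrix. If this is not to be taken as a citation, it must be extracted from the block description of first-category $N$-matrices — after a principal rearrangement (harmless, since $N$-matrices, $S$-matrices and hidden $Z$-matrices are all invariant under principal rearrangement) such a matrix has the form $\left[\begin{array}{cc} C & D \\ E & F \end{array}\right]$ with $C,F$ entrywise negative and $D,E$ entrywise nonnegative and not both zero, and one then has to build a vector $x>0$ with $A^{-1}x>0$ by balancing $D,E$ against $C,F$, using the negativity of all principal minors. Producing that strictly positive vector is the only genuinely computational point; everything else is bookkeeping with results already established in the paper.
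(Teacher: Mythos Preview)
Your argument is correct and follows essentially the same route as the paper: both show $A^{-1}$ is an $N$-matrix and then rule out the first category by combining a cited fact about first-category $N$-matrices (you cite that they are $S$-matrices via \cite{neogy2}; the paper cites \cite{olech} that they are $Q$-matrices, hence have positive value) with the hidden-$Z$ equivalence of $P$ and $S$. The paper invokes its own Theorem~\ref{maintheorem} at the final step rather than Theorem~\ref{pangps} directly, and leaves implicit the point you spell out, that $A^{-1}$ is itself hidden~$Z$.
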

\begin{proof}
	Let $A$ be a hidden $Z$-matrix with real entries. If $A$ is an almost $P$-matrix then $A^{-1}$ is an $N$-matrix. Suppose $A^{-1}$ is an $N$-matrix of first category. Then by \cite{olech}, $A^{-1}$ is a $Q$-matrix. Therefore by Theorem \ref{maintheorem} we arrive at a contradiction. This implies $A^{-1}$ is an $N$-matrix of second category. 
\end{proof}

\begin{theorem}
	Let $A$ be a hidden $Z$-matrix with real entries. Assume that $A$ is an $E_0$-matrix and every feasible basis of FEA$(q, A)$ is non-degenerate. Let LCP$(q,A)$ have a solution. Then the problem has a unique non-degenerate solution.
\end{theorem}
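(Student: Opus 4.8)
The plan is to combine three ingredients: the $E_0$-property (which by Theorem~\ref{identical} and the structure of hidden $Z$-matrices gives a strong sign-control on feasible points), the non-degeneracy hypothesis on every feasible basis of $\text{FEA}(q,A)$, and the fact that a hidden $Z$-matrix is a $P_0$-matrix whenever a strictly positive $x$ with $Ax\geq 0$ exists (Theorem~\ref{p0}). First I would fix a solution $z^\star\in\text{SOL}(q,A)$, which exists by hypothesis, and let $w^\star=q+Az^\star$. Using the complementarity $z^{\star T}w^\star=0$ together with the assumption that every feasible basis is non-degenerate, I would argue that $z^\star$ is itself non-degenerate: if it were degenerate, then the corresponding basic feasible vector of the system $\{z\geq 0,\ w=q+Az\geq 0\}$ would have a zero basic component, contradicting non-degeneracy of that basis. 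This disposes of the ``non-degenerate'' part of the conclusion once uniqueness is established, since any solution must be non-degenerate by the same argument.

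For uniqueness, suppose $z^1,z^2$ are two distinct solutions with $w^i=q+Az^i$. Writing $u=z^1-z^2$ and $v=w^1-w^2=Au$, the standard hidden-$Z$/$P_0$ argument is to look at the index set $\alpha=\{i:u_i>0\}$ and $\beta=\{i:u_i<0\}$ and derive a sign contradiction. The key step here is to exploit the $E_0$-property: applying Theorem~\ref{identical}, $A\in E_0=\bar S$ complement... more precisely, I would use that $A$ being a hidden $Z$-matrix and $E_0$-matrix forces, via Theorem~\ref{hiddenz}, a $K$-matrix $W$ built from blocks of the $Z$-matrices $X,Y$ with $AX=Y$; then transferring the two solutions through $X$ as in the proof of Theorem~\ref{p0}, one reduces the complementarity conditions for $z^1,z^2$ to complementarity-type conditions for a genuine $K_0$-matrix, where uniqueness of solutions on a non-degenerate feasible set is classical. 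Combining this with non-degeneracy (which rules out the ``mixed'' complementarity slackness that would otherwise permit a second solution) yields $z^1=z^2$.

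The main obstacle I anticipate is bridging the hidden-$Z$ structure and the degeneracy hypothesis cleanly: Theorem~\ref{p0} gives $P_0$, but $P_0$-matrices alone do not have unique LCP solutions, so the non-degeneracy of \emph{every} feasible basis (not merely the one at $z^\star$) must be used in an essential way — most likely to show that the solution set is a single point by ruling out a line segment or polyhedral piece of solutions (a $P_0$-matrix LCP can have a whole face of solutions, and that face always contains a degenerate basic solution). So the crux is the lemma: \emph{if $A$ is $P_0$ and $\text{SOL}(q,A)$ contains two distinct points, then $\text{FEA}(q,A)$ has a degenerate feasible basis}. Proving this carefully — by taking the segment between the two solutions, pushing to a vertex of the solution face, and checking that complementarity plus $P_0$ forces a zero basic variable there — is where the real work lies; the $E_0$/hidden-$Z$ hypotheses then guarantee the solution set is nonempty and well-behaved enough for the argument to close.
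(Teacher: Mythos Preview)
Your proposal takes a genuinely different route from the paper and, as written, has two real gaps.

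First, you invoke Theorem~\ref{p0} to conclude that $A\in P_0$, but that theorem requires a vector $x>0$ with $Ax\ge 0$, and the $E_0$ hypothesis does not hand you such an $x$. The class $E_0$ only says that for each nonzero $x\ge 0$ \emph{some} coordinate of $Ax$ is nonnegative at a positive entry of $x$; it does not produce a single strictly positive $x$ with $Ax\ge 0$. So your reduction to a $P_0$-setting is not justified by the tools you cite. Second, even granting $A\in P_0$, your entire argument rests on the ``crux lemma'' that two distinct LCP solutions force a degenerate feasible basis, and you explicitly leave this unproved. That lemma is doing all the work, and without it the proposal is a plan rather than a proof.

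The paper sidesteps both issues with a perturbation argument you do not mention. It sets $A(\epsilon)=A+\epsilon I$ and shows, for $\epsilon$ in a suitable interval, that $A(\epsilon)$ is again hidden~$Z$ (because $Y+\epsilon X$ stays a $Z$-matrix and $r^TX+s^T(Y+\epsilon X)>0$). Since $A\in E_0$ implies $A(\epsilon)\in E=\bar S$ by Theorem~\ref{identical}, Theorem~\ref{pangps} then gives $A(\epsilon)\in P$. Now the non-degeneracy hypothesis is used in a very concrete way: if $(-A_{\cdot k},I_{\cdot\bar k})$ is a complementary feasible basis for LCP$(q,A)$ with $z_k=-A_{kk}^{-1}q_k>0$ and $w_{\bar k}>0$ strictly, then for all sufficiently small $\epsilon$ the perturbed quantities $-A(\epsilon)_{kk}^{-1}q_k$ and $q_{\bar k}-A(\epsilon)_{\bar k k}A(\epsilon)_{kk}^{-1}q_k$ remain strictly positive, so the \emph{same} index set $k$ yields a solution of LCP$(q,A(\epsilon))$. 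Two distinct complementary bases for LCP$(q,A)$ would thus lift to two distinct solutions of LCP$(q,A(\epsilon))$, contradicting $A(\epsilon)\in P$. This is the missing idea: rather than proving your $P_0$ lemma directly, the paper perturbs into the $P$-class where uniqueness is automatic, and non-degeneracy is exactly what makes the lift stable under perturbation.
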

\begin{proof}
	Suppose $A$ is a hidden $Z$-matrix with real entries then there exist two $Z$-matrices $X, \, Y$ with two nonnegative vectors $r, s$ such that \begin{center}
		$AX = Y,$\\
		$r^TX + s^TY > 0$.
	\end{center}
    \noindent Consider $A(\epsilon) = A+ \epsilon I$ for all $\epsilon \in (0,l),$ where \[ l=  \frac{\min_{i} (r^TX + s^TY)} {\max_{i} \mid(s^TX)\mid}. \] As $X$ is nonsingular by the Theorem \ref{hiddenz}, it is clear that $s^TX \neq 0$. Now $(A + \epsilon I)X = AX + \epsilon X = Y + \epsilon X.$ Note that $Y + \epsilon X$ is a $Z$-matrix. Again $r^TX + s^T(Y + \epsilon X) > 0$ by the choice of $\epsilon.$ Hence $A(\epsilon)$ is a hidden $Z$-matrix. Note that $A$ is an $E_0$-matrix. It is easy show that $(A + \epsilon I)$ is an $E$-matrix.   
	Let $(-A._k, I._{\bar{k}}),$ where $k \subseteq \{1,2, \cdots, n\}$ and $\bar{k} \subseteq \{1,2,\cdots n\} \setminus k$ denote a basis. By our assumption, $z_k = -(A_{kk})^{-1}q_k >0$, $w_{\bar{k}} = q_{\bar{k}} - A_{\bar{k}k}(A_{kk})^{-1}q_k > 0$. For sufficiently small $\epsilon \in (0,l)$, $A(\epsilon)_{kk}$ is nonsingular and $z_{k}'=  -(A(\epsilon)_{kk})^{-1}q_k > 0 \ \text{and} \ w_{\bar{k}}' = q_{\bar{k}} - A(\epsilon)_{\bar{k}k}(A(\epsilon)_{kk})^{-1}q_k > 0$. Therefore $z'=(z_{k}',0), \ w'=(0,w_{\bar{k}}')$ is a non-degenerate solution to $LCP(q,A(\epsilon))$. Assume that $(-A._p, I._{\bar{p}})$ denotes another complementary feasible basis for LCP$(q,A),$ where $k \neq p \subseteq \{1,2, \cdots n\}$ and $\bar{p} \subseteq \{1,2,\cdots n\} \setminus p.$ Hence $(w'',z'')$ is another non-degenerate solution to the LCP$(q, A(\epsilon)).$ Note that by the Theorem \ref{identical}, $A(\epsilon)$ is an $\bar{S}$-matrix. Then by the property $2$ of \cite{chusemimonotone}, it contradicts that LCP$(q, A(\epsilon))$ has unique solution as $A(\epsilon)$ is a $P$-matrix. Therefore LCP$(q,A)$ has a unique non-degenerate solution.
\end{proof}
Now we show some sufficient conditions under which a principal submatrix of a hidden $Z$-matrix will be hidden $Z.$
\begin{theorem}
	Let $A$ be a hidden $Z$-matrix with real entries such that $AX = Y$ and $r^TX + s^TY > 0$ where $X, Y$ are $Z$-matrix and $r, s \in R^n_+.$ If there exists an index set $\alpha \subset \{1,2,\cdots, n\}$ such that $W=$$\left[\begin{array}{rr} 
	X_{\al \al} & X_{\al \bar{\al}} \\
	Y_{\bar{\al} \al} & Y_{\bar{\al} \bar{\al}} \\
	\end{array}\right]$ and $\bar{W}=$$\left[\begin{array}{rr} 
	Y_{\al \al} & Y_{\al \bar{\al}} \\
	X_{\bar{\al} \al} & X_{\bar{\al} \bar{\al}} \\
	\end{array}\right]$ are $E$-matrices. Then $A_{\al \al}$ and $A_{\bar{\al} \bar{\al}}$ are hidden $Z$-matrices. 
\end{theorem}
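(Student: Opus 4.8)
The plan is to exploit the block structure of $W$ and $\bar W$ together with the Schur-complement identities already used in the proof of Theorem~\ref{p0}. Recall from that proof that if $\alpha$ is any index set, then $A_{\al\al}$ fits into a Schur-complement relation with $X$ and a mixed block matrix: indeed, writing $\beta=\bar\al$ one has
\[
A_{\al\al}\bigl(X/X_{\bar\al\bar\al}\bigr)=\bigl(N/X_{\bar\al\bar\al}\bigr),\qquad
N=\left[\begin{array}{rr} X_{\bar\al\bar\al} & X_{\bar\al\al}\\ Y_{\al\bar\al} & Y_{\al\al}\end{array}\right],
\]
and symmetrically for $A_{\bar\al\bar\al}$ with the matrix $M$ built from $X_{\al\al},X_{\al\bar\al},Y_{\bar\al\al},Y_{\bar\al\bar\al}$ (which is exactly a principal rearrangement of $W$). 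So the strategy is: show that the relevant mixed block matrix is, after the Schur reduction, a $Z$-matrix, and that the corresponding Schur complement of $X$ is a $K$-matrix; then $A_{\al\al}$ (resp. $A_{\bar\al\bar\al}$) satisfies $A_{\al\al} P = R$ with $P$ a $K$-matrix and $R$ a $Z$-matrix, which makes it hidden $Z$ (a $K$-matrix is in particular a $Z$-matrix, so one can read off $r',s'$ directly, or invoke that $P$ being a $K$-matrix forces $r'^T P>0$ for suitable $r'\ge 0$).

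First I would record the consequences of the hypothesis that $W$ and $\bar W$ are $E$-matrices. By Theorem~\ref{identical}, $E=\bar S$, so $W$ and $\bar W$ are $\bar S$-matrices; in particular every principal submatrix of $W$ and of $\bar W$ is an $S$-matrix. Crucially, $X_{\al\al}$ is a principal submatrix of $\bar W$ (the top-left block, since $\bar W$ has $Y_{\al\al}$ there — wait, one must check: $X_{\al\al}$ appears as a principal submatrix of $W$) and likewise $X_{\bar\al\bar\al}$ appears inside $W$ and $\bar W$; I would pin down exactly which blocks of $W,\bar W$ are principal submatrices and conclude that $X_{\al\al}$ and $X_{\bar\al\bar\al}$ are $Z$-matrices that are also $S$-matrices, hence $K$-matrices (a $Z$-matrix which is an $S$-matrix is a $P$-matrix, by Theorem~\ref{Z}, hence $K$). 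This gives nonsingularity of $X_{\al\al}$ and $X_{\bar\al\bar\al}$ with nonnegative inverses, which is what makes the Schur complements $X/X_{\al\al}$ and $X/X_{\bar\al\bar\al}$ well-defined.

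Next I would argue that $X/X_{\al\al}$ and $X/X_{\bar\al\bar\al}$ are $K$-matrices. Since $X$ is a $Z$-matrix and $X_{\al\al},X_{\bar\al\bar\al}$ are $K$-matrices (principal submatrices with nonnegative inverses), the Schur complement of a $Z$-matrix with respect to a $K$-principal-submatrix is again a $Z$-matrix (off-diagonal entries $x_{ij}-X_{i\al}X_{\al\al}^{-1}X_{\al j}\le 0$ because $x_{ij}\le 0$ and the correction term is $\ge 0$); and one checks it is also an $S$-matrix — here is where the $\bar S$ hypothesis on $W,\bar W$ gets used a second time, transporting positivity through the block elimination exactly as in the displayed computation in the proof of Theorem~\ref{p0} with the row operator $[-X_{\bar\al\al}X_{\al\al}^{-1}\ \ I]$. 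Then I would verify that $M/X_{\al\al}$ (the mixed block Schur complement) is a $Z$-matrix, using that $M$ is a principal rearrangement of $W$ together with $Y$ being a $Z$-matrix and $X_{\al\al}$ a $K$-matrix; symmetrically $N/X_{\bar\al\bar\al}$ is a $Z$-matrix from $\bar W$. Finally the Schur identity $A_{\bar\al\bar\al}(X/X_{\al\al})=(M/X_{\al\al})$ exhibits $A_{\bar\al\bar\al}$ as hidden $Z$ (factor $P=X/X_{\al\al}$ a $K$-matrix, $R=M/X_{\al\al}$ a $Z$-matrix, and take $r'$ with $r'^TP>0$, $s'=0$), and symmetrically for $A_{\al\al}$.

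The main obstacle I anticipate is bookkeeping: getting the block labels of $W$ versus $\bar W$ to line up correctly with which principal submatrix (of which matrix) each of $X_{\al\al}$, $X_{\bar\al\bar\al}$, $Y_{\al\al}$, $Y_{\bar\al\bar\al}$ is, and then making sure the positivity certificate ($\bar S$ / $E$ property) is invoked on a block where it actually applies. The algebra of the Schur complements is routine and already modeled in the proof of Theorem~\ref{p0}; the delicate point is that being an $E$-matrix ($=\bar S$) gives an $S$-property on \emph{every} principal submatrix, and I must make sure the block I need is genuinely principal in $W$ or $\bar W$ rather than an off-diagonal block, otherwise the positivity does not transfer and the argument stalls.
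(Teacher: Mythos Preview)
Your overall strategy---the Schur-complement identities $A_{\bar\al\bar\al}(X/X_{\al\al})=W/X_{\al\al}$ and $A_{\al\al}(X/X_{\bar\al\bar\al})=\bar W/X_{\bar\al\bar\al}$ (your $M$ and $N$ are precisely $W$ and a principal rearrangement of $\bar W$)---is exactly what the paper uses. The gap is in where you locate the positivity certificate. You propose to show that $P=X/X_{\al\al}$ is a $K$-matrix and then take $r'$ with $r'^TP>0$, $s'=0$. But $X/X_{\al\al}$ need \emph{not} be $K$ under the stated hypotheses. Take $n=2$, $\al=\{1\}$,
\[
X=\left[\begin{array}{rr}1&-2\\-2&1\end{array}\right],\qquad Y=3I,\qquad A=YX^{-1}=\left[\begin{array}{rr}-1&-2\\-2&-1\end{array}\right].
\]
Then $W=\left[\begin{array}{rr}1&-2\\0&3\end{array}\right]$ and $\bar W=\left[\begin{array}{rr}3&0\\-2&1\end{array}\right]$ are both $K$-matrices (hence $E$), and $r=0$, $s=e$ gives $r^TX+s^TY>0$; yet $X/X_{11}=1-4=-3<0$. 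Your appeal to ``transporting positivity through the block elimination as in Theorem~\ref{p0}'' cannot work here, because that argument relied on a vector $x_1>0$ with $Xx_1>0$, manufactured from the extra hypothesis $x>0$, $Ax\ge 0$, which is absent in the present theorem.

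The paper's route sidesteps this by reading the positivity off the \emph{other} factor. The observation you are missing is elementary: $W$ and $\bar W$ are themselves $Z$-matrices (every off-diagonal entry is an off-diagonal entry of $X$ or of $Y$), so the hypothesis $W,\bar W\in E$ together with $Z\cap E=Z\cap\bar S\subseteq Z\cap S=K$ gives $W,\bar W\in K$ outright. Schur complements of $K$-matrices are again $K$-matrices, so $W/X_{\al\al}$ and $\bar W/X_{\bar\al\bar\al}$ are $K$. Now in $A_{\bar\al\bar\al}(X/X_{\al\al})=W/X_{\al\al}$ both factors are $Z$-matrices (the first because $X$ is $Z$ and $X_{\al\al}$, as the leading principal block of $W$, is $K$), and since $(W/X_{\al\al})^T$ is also $K$ one may take $r'=0$ and any $s'>0$ with $s'^T(W/X_{\al\al})>0$. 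The fix to your argument is therefore just to swap which factor carries the certificate; the Schur bookkeeping you set up is correct, and your worry about whether $X_{\al\al}$, $X_{\bar\al\bar\al}$ sit as \emph{principal} blocks is resolved by noting they are the top-left block of $W$ and the bottom-right block of $\bar W$ respectively.
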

\begin{proof}
	Note that $A$ is a hidden $Z$-matrix with real entries then there exist two $Z$-matrices $X, \, Y$ with two nonnegative vectors $r, s$ such that \begin{center}
		$AX = Y,$\\
		$r^TX + s^TY > 0.$
	\end{center}
	This implies for an index set $\alpha \subset \{1,2,\cdots n\}$ such that $W=$$\left[\begin{array}{rr} 
	X_{\al \al} & X_{\al \bar{\al}} \\
	Y_{\bar{\al} \al} & Y_{\bar{\al} \bar{\al}} \\
	\end{array}\right]$ and $\bar{W}=$$\left[\begin{array}{rr} 
	Y_{\al \al} & Y_{\al \bar{\al}} \\
	X_{\bar{\al} \al} & X_{\bar{\al} \bar{\al}} \\
	\end{array}\right]$ are $Z$-matrices as well as $E$-matrices. Therefore $W/X_{\al \al}$, $\bar{W}/X_{\bar{\al} \bar{\al}}$ are $K$-matrices. Also note that $X/X_{\al \al}$, $X/X_{\bar{\al} \bar{\al}}$ are $Z$-matrices. This implies that $A_{\al \al}$ is a $\text{hidden}\  Z$-matrix with $X/X_{\bar{\al} \bar{\al}}, \bar{W}/X_{\bar{\al} \bar{\al}}$ are $Z$-matrices such that $A_{\al \al}(X/X_{\bar{\al} \bar{\al}})=\bar{W}/X_{\bar{\al} \bar{\al}}$. Similarly the principal submatrix $A_{\bar{\al} \bar{\al}}$ is a $\text{hidden}\ Z$-matrix with $X/X_{\al \al},$ $W/X_{\al \al}$ are $Z$-matrices such that $A_{\bar{\al}  \bar{\al}}(X/X_{\al \al})=W/X_{\al \al}$. 
\end{proof}
\begin{theorem}\label{complth}
	Let $A$ be a hidden $Z$-matrix with real entries such that $AX = Y$ and $r^TX + s^TY > 0$ where $X, Y$ are $Z$-matrix and $r, s \in R^n_+.$ If there exists an index set $\alpha = \{1,2,\cdots, n\}$ such that $W=$$\left[\begin{array}{rr} 
	X_{\al \al} & X_{\al \bar{\al}} \\
	Y_{\bar{\al} \al} & Y_{\bar{\al} \bar{\al}} \\
	\end{array}\right]$ and $\bar{W}=$$\left[\begin{array}{rr} 
	Y_{\al \al} & Y_{\al \bar{\al}} \\
	X_{\bar{\al} \al} & X_{\bar{\al} \bar{\al}} \\
	\end{array}\right]$ are $E$-matrices. Then $A$ is a completely hidden $Z$-matrix.  
\end{theorem}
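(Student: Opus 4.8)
The plan is to iterate, over every index set, the Schur-complement construction already used in the proof of Theorem~\ref{p0} (and in the preceding theorem). First I would unwind the hypothesis: since $\alpha=\{1,2,\cdots,n\}$ forces $\bar{\alpha}=\emptyset$, the matrices $W$ and $\bar{W}$ are literally $X$ and $Y$, so the assumption says exactly that $X$ and $Y$ are $E$-matrices. As $X$ and $Y$ are also $Z$-matrices and $E=\bar{S}$ by Theorem~\ref{identical}, each of them is a $Z$-matrix that is in particular an $S$-matrix, hence has positive game value; Theorem~\ref{Z} then forces $X,Y\in Z\cap P=K$. I would also record the two standard facts about $K$-matrices (equivalently, nonsingular $M$-matrices) that the argument needs: for any $\beta\subseteq\{1,2,\cdots,n\}$ with complement $\gamma$, (a) the principal submatrix $X_{\beta\beta}$ is again a $K$-matrix, so $X_{\beta\beta}^{-1}\ge 0$; and (b) the Schur complement $X/X_{\beta\beta}$ is again a $K$-matrix, since it is a $Z$-matrix whose inverse equals the principal submatrix $(X^{-1})_{\gamma\gamma}\ge 0$.

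Next, fix an arbitrary $\beta\subseteq\{1,2,\cdots,n\}$ and put $\gamma=\{1,2,\cdots,n\}\setminus\beta$; it suffices to show $A_{\gamma\gamma}$ is a hidden $Z$-matrix, because $\gamma$ runs over all subsets as $\beta$ does. Partitioning $AX=Y$ along $\{\beta,\gamma\}$ and eliminating the $\beta$-block gives the identity
\[
A_{\gamma\gamma}\,(X/X_{\beta\beta})=N/X_{\beta\beta},\qquad N=\left[\begin{array}{rr} X_{\beta\beta} & X_{\beta\gamma}\\ Y_{\gamma\beta} & Y_{\gamma\gamma}\end{array}\right],
\]
exactly as in the proof of Theorem~\ref{p0}. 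I would then check that both matrices appearing here are $Z$-matrices: $X/X_{\beta\beta}$ is $K$ by fact (b), hence $Z$; and $N/X_{\beta\beta}=Y_{\gamma\gamma}-Y_{\gamma\beta}X_{\beta\beta}^{-1}X_{\beta\gamma}$ is $Z$ because $Y_{\gamma\gamma}$ is a $Z$-matrix while $Y_{\gamma\beta}\le 0$, $X_{\beta\beta}^{-1}\ge 0$ and $X_{\beta\gamma}\le 0$, so the subtracted product is entrywise nonnegative and only pushes the off-diagonal entries further below zero.

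Finally, to produce the positivity vector in the definition of a hidden $Z$-matrix for $A_{\gamma\gamma}$, I would use that $X/X_{\beta\beta}$ is a $K$-matrix, hence so is its transpose, hence it is an $S$-matrix: there is a vector $u>0$ with $u^{T}(X/X_{\beta\beta})>0$. Taking $r'=u$ and $s'=0$, both nonnegative vectors of dimension $|\gamma|$, gives $(r')^{T}(X/X_{\beta\beta})+(s')^{T}(N/X_{\beta\beta})>0$, so $A_{\gamma\gamma}$ is a hidden $Z$-matrix with witnessing pair $X/X_{\beta\beta}$, $N/X_{\beta\beta}$. Since $\beta$ was arbitrary (the degenerate choices $\beta=\emptyset$ and $\beta=\{1,2,\cdots,n\}$ returning, respectively, the original hypothesis and a vacuous statement), every principal submatrix of $A$ is a hidden $Z$-matrix, i.e.\ $A$ is completely hidden $Z$. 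The only step requiring genuine care rather than sign bookkeeping is establishing facts (a) and (b) on Schur complements of $K$-matrices; the rest is exactly the computation already rehearsed in Theorem~\ref{p0}.
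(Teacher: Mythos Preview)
Your proof is correct and follows essentially the same route as the paper: unwind the hypothesis to see that $W=X$ and $\bar W=Y$ are $Z\cap E$-matrices (hence $K$-matrices), then for each index set use the Schur-complement identity $A_{\gamma\gamma}(X/X_{\beta\beta})=N/X_{\beta\beta}$ from Theorem~\ref{p0} to exhibit $A_{\gamma\gamma}$ as hidden $Z$. Your write-up is in fact more explicit than the paper's, which asserts that $X/X_{\beta\beta}$ is a $K$-matrix and that $\bar M/X_{\bar\beta\bar\beta}$ is a $Z$-matrix without spelling out the sign bookkeeping, and which omits the verification of the positivity condition $r'^{T}X'+s'^{T}Y'>0$ that you supply via $r'=u>0$, $s'=0$ and the $S$-property of $(X/X_{\beta\beta})^{T}$.
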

\begin{proof}
	Note that $A$ is a hidden $Z$-matrix with real entries then there exist two $Z$-matrices $X, \, Y$ with two nonnegative vectors $r, s$ such that \begin{center}
		$AX = Y,$\\
		$r^TX + s^TY > 0$.
	\end{center}
	Hence for an index set $\alpha = \{1,2,\cdots, n\}$ such that $W=$$\left[\begin{array}{rr} 
	X_{\al \al} & X_{\al \bar{\al}} \\
	Y_{\bar{\al} \al} & Y_{\bar{\al} \bar{\al}} \\
	\end{array}\right] = X$ and $\bar{W}=$$\left[\begin{array}{rr} 
	Y_{\al \al} & Y_{\al \bar{\al}} \\
	X_{\bar{\al} \al} & X_{\bar{\al} \bar{\al}} \\
	\end{array}\right]  = Y$ are $Z$-matrices as well as $E$-matrices. This implies that for any $\beta \subset \{1, 2, \cdots, n\}$, $X/X_{\beta \beta}$ and $X/X_{\bar{\beta} \bar{\beta}}$ are $K$-matrices. Then the principal submatrix $A_{\beta \beta}$ of $A$ is a $\text{hidden} \ Z$-matrix with $X/X_{\bar{\beta} \bar{\beta}}, \bar{M}/X_{\bar{\beta} \bar{\beta}}$ are $Z$-matrices such that $A_{\beta \beta}(X/X_{\bar{\beta} \bar{\beta}})=\bar{M}/X_{\bar{\beta} \bar{\beta}},$ where $\bar{M}=$ $\left[\begin{array}{rr} 
	Y_{\beta \beta} & Y_{\beta \bar{\beta}} \\
	X_{\bar{\beta} \beta} & X_{\bar{\beta} \bar{\beta}} \\
	\end{array}\right].$
\end{proof}
\begin{remk}
	Let $A$ be a hidden $Z$-matrix with real entries such that $AX = Y$ and $r^TX + s^TY > 0$ where $X, Y$ are $Z$-matrix and $r, s \in R^n_+$ and suppose there exists an empty index set $\alpha$ such that $W=$$\left[\begin{array}{rr} 
	X_{\al \al} & X_{\al \bar{\al}} \\
	Y_{\bar{\al} \al} & Y_{\bar{\al} \bar{\al}} \\
	\end{array}\right]$ and $\bar{W}=$$\left[\begin{array}{rr} 
	Y_{\al \al} & Y_{\al \bar{\al}} \\
	X_{\bar{\al} \al} & X_{\bar{\al} \bar{\al}} \\
	\end{array}\right]$ are $E$-matrices. Then $A$ is a completely hidden $Z$-matrix.  
\end{remk}

Now we introduce an alternative linear programming problem to solve  linear complementarity problem with hidden $Z$-matrix.
\begin{theorem}
Let $A$ be a hidden $Z$-matrix with real entries such that $AX = Y$ and $r^TX + s^TY > 0$ where $X, Y$ are $Z$-matrix and $r, s \in R^n_+.$ Then the linear complementarity problem denoted by LCP$(q, A)$ can be written as 
	$$\begin{array}{ll}
	\text{minimize} & (r + A^Ts)^Tz_1 + q^Tz_2\\
	\text{subject to} &  A^Ts + r - A^Tz_2 \geq 0,\\
	& Az_1 + q \geq 0,\\
	& z_1, z_2 \geq 0. 
	\end{array}$$
\end{theorem}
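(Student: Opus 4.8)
The plan is to notice that the displayed program, although written with the two blocks $z_1$ and $z_2$ coupled through the objective, in fact decouples into an ordinary linear program in $z_1$ together with its linear‑programming dual in $z_2$; an optimal solution of the program will then carry a solution of $\mathrm{LCP}(q,A)$ in its $z_1$‑block. This is the hidden‑$Z$ version of Mangasarian's scheme \cite{mangasarian} for complementarity problems solvable by a single linear program, and the place where the structure theorem \ref{hiddenz} really enters is exactly the final extraction step.

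First I would set $c:=r+A^Ts$ and record the decoupling: $z_1$ appears only in $Az_1+q\ge 0,\ z_1\ge 0$ and $z_2$ only in $A^Ts+r-A^Tz_2\ge 0,\ z_2\ge 0$, so the program is the sum of
\[
\mathrm{(P)}\ \ \min\ c^Tz_1 \ \text{ subject to } Az_1\ge -q,\ z_1\ge 0
\qquad\text{and}\qquad
\mathrm{(D)}\ \ \min\ q^Tz_2 \ \text{ subject to } A^Tz_2\le c,\ z_2\ge 0 .
\]
I would then observe: the feasible set of $\mathrm{(P)}$ is exactly $\mathrm{FEA}(q,A)$; $\mathrm{(D)}$ is always feasible (take $z_2=s$, since $A^Ts+r-A^Ts=r\ge 0$); $\mathrm{(D)}$ is, up to the sign of the objective, the LP‑dual of $\mathrm{(P)}$; and $\mathrm{(P)}$ is bounded below whenever $\mathrm{FEA}(q,A)\ne\emptyset$, because every recession direction $d$ of $\mathrm{FEA}(q,A)$ satisfies $d\ge 0,\ Ad\ge 0$, hence $c^Td=r^Td+s^T(Ad)\ge 0$. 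Consequently $\mathrm{LCP}(q,A)$ is feasible iff $\mathrm{(P)}$ is feasible iff the program has an optimal solution, and in that case LP strong duality gives optimal value $\mathrm{val(P)}+\mathrm{val(D)}=0$.

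Next I would use the elementary identity obtained by expanding the objective: writing $w:=Az_1+q$ and $u:=A^Ts+r-A^Tz_2$,
\[
(r+A^Ts)^Tz_1+q^Tz_2 \;=\; z_1^Tu+z_2^Tw .
\]
On the feasible set $z_1,z_2,u,w\ge 0$, so the objective is nonnegative there and vanishes at $(z_1^\star,z_2^\star)$ precisely when $z_1^\star\perp u^\star$ and $z_2^\star\perp w^\star$; these are exactly the complementary‑slackness relations between $\mathrm{(P)}$ and its dual, so by the previous paragraph they hold at every optimal solution. Thus from an optimal solution I obtain a point $z_1^\star\in\mathrm{FEA}(q,A)$ together with the two orthogonality relations just stated.

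The remaining --- and, I expect, the only genuinely hard --- step is to upgrade ``$z_1^\star\perp u^\star$ and $z_2^\star\perp w^\star$'' to $z_1^\star\perp w^\star$, i.e.\ $(z_1^\star)^T(Az_1^\star+q)=0$, which puts $z_1^\star\in\mathrm{SOL}(q,A)$. Here the hidden‑$Z$ hypothesis is essential: by part (ii) of Theorem \ref{hiddenz}, $X$ is nonsingular and there is an index set $\al$ for which the matrix $W$ appearing there is a $K$‑matrix, hence $W^{-1}\ge 0$. The plan is to pass to the variable $\zeta:=X^{-1}z_1^\star$ and rewrite the feasibility and complementarity data of $\mathrm{LCP}(q,A)$ in terms of $W$ and the Schur complements $X/X_{\al\al}$ and $X/X_{\bar{\al}\bar{\al}}$, exactly as in the proof of Theorem \ref{p0}; the sign condition $W^{-1}\ge 0$ then transports the zero inner products through the change of variables and yields $z_1^\star\perp w^\star$. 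Conversely any $\bar z\in\mathrm{SOL}(q,A)$ is feasible for $\mathrm{(P)}$, and feasibility of $\mathrm{(P)}$ has already been seen to force an optimal solution of the program, so the two problems are equivalent. The delicate point is precisely this transfer through $X^{-1}$: because the $K$‑matrix $W$ interleaves rows of $X$ with rows of $Y$, the reduction is not a literal substitution, and one must carry the Schur‑complement bookkeeping of Theorems \ref{hiddenz} and \ref{p0} carefully to be sure the complementarity survives.
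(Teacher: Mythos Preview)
Your route differs from the paper's. The paper does not analyse the linear program directly at all: it introduces the skew--symmetric problem $\mathrm{LCP}(\bar q,\bar A)$ with
\[
\bar A=\begin{pmatrix}0&-A^T\\ A&0\end{pmatrix},\qquad \bar q=\begin{pmatrix}r+A^Ts\\ q\end{pmatrix},
\]
cites Lemma~1 of Mangasarian \cite{mangasarian} and Lemma~3.3 of \cite{dubey} for the equivalence $\mathrm{LCP}(q,A)\Leftrightarrow\mathrm{LCP}(\bar q,\bar A)$, and then observes that because $\bar A+\bar A^T=0$ the associated quadratic program $\min\,\bar q^T\bar z+\tfrac12\bar z^T(\bar A+\bar A^T)\bar z$ over $\{\bar z\ge 0:\bar q+\bar A\bar z\ge 0\}$ is already linear and coincides with the displayed program. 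Thus the entire ``hard step'' is absorbed into the two citations.

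Your primal/dual decoupling, the feasibility of $(\mathrm D)$ at $z_2=s$, the recession--direction bound for $(\mathrm P)$, and the identity $c^Tz_1+q^Tz_2=z_1^Tu+z_2^Tw$ are all correct and give a clean explanation of \emph{why} the optimal value is zero and complementary slackness holds. The gap is precisely where you flag it: upgrading $z_1^\star\perp u^\star$ and $z_2^\star\perp w^\star$ to $z_1^\star\perp w^\star$. Your proposed mechanism---set $\zeta=X^{-1}z_1^\star$, use $W^{-1}\ge 0$, and ``transport the zero inner products''---does not do this as stated: the orthogonality you possess pairs $w^\star$ with $z_2^\star$, not with $z_1^\star$, and a change of variable in $z_1$ alone cannot manufacture $(z_1^\star)^Tw^\star=0$ from those relations. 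What is actually needed here is exactly Mangasarian's argument (that for hidden $Z$ the LP minimiser over $\mathrm{FEA}(q,A)$ with cost $p=r+A^Ts$ solves the LCP), which is not a corollary of the Schur--complement bookkeeping in Theorem~\ref{p0}. So either invoke \cite{mangasarian} at that point---which collapses your argument to the paper's---or supply that lemma's proof in full; the sketch you give is not yet one.
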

\begin{proof}
To prove our result we consider LCP$(\bar{q}, \bar{A})$ where $\bar{A}=$ $\left[\begin{array}{cc} 
0 & -A^T\\
A & 0 \\
\end{array}\right]$ and $\bar{q}=$ $\left[\begin{array}{c} 
p \\
q \\
\end{array}\right]$ with $p = r+A^Ts.$ By Lemma 1 of \cite{mangasarian} and Lemma 3.3 of \cite{dubey}, LCP$(q, A)$ and LCP$(\bar{q}, \bar{A})$ are equivalent. Assume $\bar{z}=$ $\left[\begin{array}{c} 
z_1 \\
z_2 \\
\end{array}\right]$ be the solution of LCP$(\bar{q}, \bar{A}).$ Note that $\bar{A}$ is a skew symmetric matrix. Now LCP$(\bar{q}, \bar{A})$ can be written as 
$$\begin{array}{ll}
\text{minimize} & \bar{q}^T \bar{z} + \frac{1}{2} \bar{z}^T(\bar{A} + \bar{A}^T)\bar{z}\\
\text{subject to} & \bar{q} + \bar{A}\bar{z} \geq 0,\\
& \bar{z} \geq 0. 
\end{array}$$
Again equivalent quadratic programming problem can be rewritten as 
$$\begin{array}{ll}
\text{minimize} & (r + A^Ts)^Tz_1 + q^Tz_2\\
\text{subject to} & A^Ts + r -A^Tz_2 \geq 0,\\
& Az_1 + q \geq 0,\\
& z_1, z_2 \geq 0. 
\end{array}$$

\end{proof}
 
\section{Conclusion}
In this article, we study the class of hidden $Z$-matrix in the context of linear complementarity problem. We show that linear complementarity problem with hidden $Z$-matrix is processable by Lemke's algorithm as well as criss-cross method. To prove our result we apply the concept of principal pivot transform and game theoretic approach. We establish certain matrix theoretic characterization of hidden $Z$-matrix to show the $P_0$ properties.

\section*{Acknowledgement}
The authors R. Jana and A. Dutta are thankful to the Department of Science and Technology, Govt. of India, INSPIRE Fellowship Scheme for financial support.
\vsp

\bibliographystyle{plain}
\bibliography{bibfile}

\begin{thebibliography}{10}

\bibitem{chusemimonotone}
Teresa~H Chu.
\newblock On semimonotone matrices with nonnegative principal minors.
\newblock {\em Linear algebra and its applications}, 367:147--154, 2003.

\bibitem{cps}
RW~Cottle, JS~Pang, and RE~Stone.
\newblock The linear complementarity problem. 1992.
\newblock {\em AP, New York}.

\bibitem{das}
AK~Das.
\newblock Properties of some matrix classes based on principal pivot transform.
\newblock {\em Annals of Operations Research}, 243(1-2):375--382, 2016.

\bibitem{das1}
AK~Das, R~Jana, and Deepmala.
\newblock On generalized positive subdefinite matrices and interior point
  algorithm.
\newblock In {\em Frontiers in Optimization: Theory and Applications}, pages
  3--16. Springer, 2016.

\bibitem{jana}
AK~Das, R~Jana, and Deepmala.
\newblock Finiteness of criss-cross method in complementarity problem.
\newblock In {\em Mathematics and Computing, eds: Debasis Giri, Ram N.
  Mohapatra, Heinrich Begehr, Mohammad S. Obaidat}, pages 170--180. Springer,
  2017.

\bibitem{fiedler}
Miroslav Fiedler and Vlastimil Pt{\'a}k.
\newblock On matrices with non-positive off-diagonal elements and positive
  principal minors.
\newblock {\em Czechoslovak Mathematical Journal}, 12(3):382--400, 1962.

\bibitem{kaplansky}
Irving Kaplansky.
\newblock A contribution to von neumann's theory of games.
\newblock {\em Annals of Mathematics}, pages 474--479, 1945.

\bibitem{kojima}
Masakazu Kojima, Nimrod Megiddo, Toshihito Noma, and Akiko Yoshise.
\newblock {\em A unified approach to interior point algorithms for linear
  complementarity problems}, volume 538.
\newblock Springer Science \& Business Media, 1991.

\bibitem{mangasarian}
Olvi~L Mangasarian.
\newblock Linear complementarity problems solvable by a single linear program.
\newblock {\em Mathematical Programming}, 10(1):263--270, 1976.

\bibitem{markham}
Thomas~L Markham.
\newblock Nonnegative matrices whose inverses are {$M$}-matrices.
\newblock {\em Proceedings of the American Mathematical Society},
  36(2):326--330, 1972.

\bibitem{neogy2}
SK~Neogy and AK~Das.
\newblock On almost type classes of matrices with q-property.
\newblock {\em Linear and Multilinear Algebra}, 53(4):243--257, 2005.

\bibitem{neogy1}
SK~Neogy and AK~Das.
\newblock Principal pivot transforms of some classes of matrices.
\newblock {\em Linear algebra and its applications}, 400:243--252, 2005.

\bibitem{dubey}
SK~Neogy and Dipti Dubey.
\newblock On hidden {$Z$}-matrices and the linear complementarity problem.
\newblock {\em Linear Algebra and its Applications}, 496:81--100, 2016.

\bibitem{olech}
Czeslaw Olech, T~Parthasarathy, and G~Ravindran.
\newblock Almost {$N$}-matrices and linear complementarity.
\newblock {\em Linear Algebra and its Applications}, 145:107--125, 1991.

\bibitem{pang}
Jong-Shi Pang.
\newblock Hidden {$Z$}-matrices with positive principal minors.
\newblock Technical report, WISCONSIN UNIV MADISON MATHEMATICS RESEARCH CENTER,
  1977.

\bibitem{von}
John Von~Neumann.
\newblock A certain zero-sum two-person game equivalent to the optimal
  assignment problem.
\newblock {\em Contributions to the Theory of Games}, 2:5--12, 1953.

\end{thebibliography}
\end{document}